\numberwithin{equation}{section}
\newtheorem{Def}{Definition}[section]
\newtheorem{thm}{Theorem}[section]
\newtheorem{lem}{Lemma}[section]
\newtheorem{rem}{Remark}[section]
\newtheorem{prop}{Proposition}[section]
\newcommand{\hdot}{^\text{\r{}}\hspace{-.33cm}H}
\begin{document}
\title[Toda system]{On Rank Two Toda System with Arbitrary Singularities: Local Mass and New Estimates} \subjclass{35J60, 35J55}
\keywords{SU(n+1)-Toda system, asymptotic analysis, a priori
estimate, classification theorem, topological degree, blowup
solutions, Riemann-Hurwitz Theorem}

\author{Chang-shou Lin}
\address{Department of Mathematics\\
        Taida Institute of Mathematical Sciences\\
        National Taiwan University\\
         Taipei 106, Taiwan }
\email{cslin@math.ntu.edu.tw}

\author{Juncheng Wei}
\address{Department of Mathematics \\ University of British Columbia \\
 Vancouver, B.C., Canada V6T 1Z2}
\email{jcwei@math.ubc.ca}

\author{WEN YANG}
\address{Center for Advanced Study in Theoretical Sciences (CASTS)\\
        National Taiwan University\\
         Taipei 106, Taiwan }
\email{math.yangwen@gmail.com}

\author{Lei Zhang}
\address{Department of Mathematics\\
        University of Florida\\
        358 Little Hall P.O.Box 118105\\
        Gainesville FL 32611-8105}
\email{leizhang@ufl.edu}

\date{\today}

%%%%%%%%%%%%%%%%%%%%%%%%%%%%%%%%%%%%%%%%%%%%%
\begin{abstract} For all rank two Toda systems with an arbitrary singular source, we use a unified approach to prove: (i) The pair of local masses $(\sigma_1,\sigma_2)$ at each blowup point has the expression $$\sigma_i=2(N_{i1}\mu_1+N_{i2}\mu_2+N_{i3}),$$ where $N_{ij}\in\mathbb{Z},~i=1,2,~j=1,2,3.$ (ii) Suppose at each vortex point $p_t$, $(\alpha_1^t,\alpha_2^t)$ are integers and $\rho_i\notin 4\pi\mathbb{N}$, then all the solutions of Toda systems are uniformly bounded. (iii) If the blow up point $q$ is not a vortex point, then $$u^k(x)+2\log|x-x^k|\leq C,$$ where $x^k$ is the local maximum point of $u^k$ near $q$. (iv) If the blow up point $q$ is a vortex point $p_t$ and $\alpha_t^1,\alpha_t^2$ and $1$ are linearly independent over $Q$, then $$u^k(x)+2\log|x-p_t|\leq C.$$ The Harnack type inequalities of (iii) or (iv) are important for studying the behavior of bubbling solutions near each blowup points.
\end{abstract}

%%%%%%%%%%%%%%%%%%%%%%%%%%%%%%%%%%%%%%%%%%%%%

\maketitle

\section{Introduction}

Let $(M,g)$ be a Riemann surface without boundary and $\mathbf{K}=(k_{ij})_{n\times n}$ be the Cartan matrix of a simple Lie algebra of rank $n$. For example for the Lie algebra $sl(n+1)$ (the so called $A_n$) we have
\begin{equation}\label{cartan-m}
\mathbf{K}=\left(\begin{array}{ccccc}
2 & -1 & 0 & ... & 0 \\
-1 & 2 & -1 & ... & 0 \\
\vdots & \vdots &  &  & \vdots\\
0 & ... & -1 & 2 & -1 \\
0 & ... &  0 & -1 & 2
\end{array}
\right ).
\end{equation}
In this paper we consider solution  $u=(u_1,...,u_n)$ of the following system defined on $M$:
\begin{equation}\label{e-gen}
\Delta_g u_i+\sum_{j=1}^n k_{ij}\rho_j(\frac{h_je^{u_j}}{\int_M h_j e^{u_j}dV_g}-1)=\sum_{P_{t} \in S} 4\pi \alpha_t^i (\delta_{P_t}-1),
\end{equation}
where $\Delta_g$ is the Laplace-Beltrami operator ($-\Delta_g\ge 0$), $h_1,...,h_n$ are positive and smooth functions on $M$, $\alpha_t^i>-1$ is the strength of the Dirac mass $\delta_{P_t}$, $\rho=(\rho_1,...,\rho_n)$ is a constant vector with nonnegative components. Here for simplicity we
just assume that the total area of $M$ is $1$.

Obviously, equation (\ref{e-gen}) remains the same if $u_i$ is replaced by $u_i+c_i$ for any constant $c_i$. Thus we might assume that each component of $u=(u_1,...,u_n)$ is in
$$ \hdot^{1}(M):=\{v\in L^2(M);\quad \nabla v\in L^2(M), \mbox{and }\,\, \int_M v dV_g=0\}. $$
Then equation (\ref{e-gen}) is the Euler-Lagrange equation for the following nonlinear functional $J_{\rho}(u)$ in $\mathring{H}^{1}(M)$:
$$J_{\rho}(u)=\frac 12\int_M\sum_{i,j=1}^nk^{ij}\nabla_gu_i\nabla_gu_jdV_g-{\sum_{i=1}^n\rho_i\log \int_M h_ie^{u_i}dV_g,}$$
where $(k^{ij})_{n\times n}=\mathbf{K}^{-1}$.

It is hard to overestimate the importance of system (\ref{e-gen}), as it covers a large number of equations and systems deeply rooted in geometry and Physics. Even if (\ref{e-gen}) is reduced to a single equation with Dirac sources, it is a mean field equation that has been extensively studied for decades. The singular sources on the right hand side of the mean field equation describe conic singularities and solutions can be interpreted as metrics with prescribed conic singularities.  This is a classical problem in differential geometry and extensive references can be found in  \cite{bart2,malchiodi-b,erem-3,lin-wei-ye,lwz-apde,tr-1,tr-2,yang1} etc.  Recently profound relations among mean field equation, classical Lame equation, hyper-elliptic curves, modular forms and
Painleve equation have been discovered and developed (see \cite{chai} and \cite{CKL}).

When (\ref{e-gen}) has more than one equation, it has close ties with algebraic geometry and integrable system. For example, solutions of the $sl$($n+1$) Toda system are closely related to holomorphic curves in projective spaces.
Let $f$ be a holomorphic curve from a domain $D$ of $\mathbb{R}^{2}$ into $\mathbb C\mathbb P^n$. Lift locally $f$ to $\mathbb C^{n+1}$ and denote the lift by $\nu(z)=[\nu_0(z),...,\nu_n(z)]$. The $k$th associated curve of $f$ is defined by
$$f_k:D\to G(k,n+1)\subset \mathbb C\mathbb P^n(\Lambda^k \mathbb C^{n+1}),\quad f_k(z)=[\nu(z)\wedge\nu'(z)\wedge...\wedge \nu^{(k-1)}(z)], $$
where $\nu^{(j)}$ is the $j-$th derivative of $\nu$ with respect to $z$. Let
$$\Lambda_k(z)=\nu(z)\wedge ...\wedge \nu^{(k-1)}(z), $$
then the well known infinitesimal Pl\"uker formula gives%(see \cite{dunne1}) gives
\begin{equation}
\label{plucker}
\frac{\partial^2}{\partial z\partial \bar z}\log \|\Lambda_k(z)\|^2=\frac{\|\Lambda_{k-1}(z)\|^2\|\Lambda_{k+1}(z)\|^2}{\|\Lambda_k(z)\|^4}, \quad \mbox{ for } k=1,2,..,n,
\end{equation}
where we define the norm $\|\cdot \|^2=\langle\cdot,\cdot\rangle$ by the Fubini-Study metric in $\mathbb C\mathbb P(\Lambda^k \mathbb C^{n+1})$ and put $\|\Lambda_0(z)\|^2=1$. We observe that (\ref{plucker})
holds only for $\|\Lambda_k(z)\|>0$, i.e. all the unramification points $z\in M$. Setting $\|\Lambda_{n+1}(z)\|=1$ by nomalization (analytically extended at the ramification points) and
$$U_k(z)=-\log \|\Lambda_k(z)\|^2+k(n-k+1)\log 2, \quad 1\le k\le n. $$
%we obtain from (\ref{plucker}) the following equation for $U_i$:
%$$\Delta U_i+exp\bigg (\sum_{j=1}^n k_{ij} U_j\bigg )-K_0=0,\quad \mbox{ in }\quad M\setminus S, $$
%where $K_0$ is the Gauss curvature, $S$ denotes the set of all the ramificated points of $f$ in $M$ and $K=(k_{ij})_{n\times n}$ is given in (\ref{cartan-m}). Since in a neighborhood of each $p\in S$ the asymptotic behavior of $U_i$ can be written as
%$$U_i(z)=2\gamma_{p,i} \log |z-p|+O(1), $$
%the equation for $U_i$ on the whole manifold is
%\begin{equation*}\label{e-Ui}
%\Delta U_i+exp\bigg (\sum_{j=1}^n k_{ij}U_j\bigg )-K_0=4\pi \sum_{p\in S}\gamma_{p,i}%\delta_{p_i}, \quad \mbox{in}\quad M
%\end{equation*}
%where $\gamma_{p,i}$ stands for the total ramification index at $p$. By integrating %(\ref{e-Ui}) we see that (\ref{e-Ui}) can be written as the form of (\ref{e-gen}) with
%\begin{equation*}\label{rho-i-1}
%\rho_i=4\pi(1+\sum_{j=1}^n k^{ij}N_j)
%\end{equation*}
%where $(k^{ij})_{n\times n}=(k_{ij})_{n\times n}^{-1}$ and $N_i=\sum_{p\in S}%\gamma_{p,i}$.
Let $p$ be a ramified point and $\{ \gamma_{p,1}, \cdots ,\gamma_{p,n} \}$ be the total ramification index at $p$. Write:
 $$u_i^*=\sum_{j=1}^nk_{ij}U_j, \quad \alpha_{p,i}=\sum_{j=1}^n k_{ij}\gamma_{p,j},$$
then we have
\begin{equation}\label{e-uis}
\Delta u_i^*+\sum_{j=1}^n k_{ij} e^{u_j^*}-K_0=4\pi \sum_{p\in S}\alpha_{p,i}\delta_p, \quad i=1,...,n,
\end{equation}
where $K_{0}$ is the Gaussian curvature of the metric $g$.

%When $(M,g)$ is the standard two dimensional sphere with $vol(\mathbb S^2)=1$, the equation above becomes

Therefore any holomorphic curve from $M$ to $\mathbb C\mathbb P^n$ is associated with a solution $u^*=(u^*_1,...,u^*_n)$ of (\ref{e-uis}). Conversely, given any solution $u^*=(u^*_1,...,u^*_n)$ of (\ref{e-uis}) in $\mathbb S^2$, we can construct a holomorphic curve of $\mathbb S^2$ into $\mathbb C\mathbb P^n$, which has the given ramification index $\gamma_{p,i}$
at $p$. One can see \cite{lin-wei-ye} for the detail of this construction. Therefore, equation (\ref{e-uis}) is related to the following problem: Given a set of ramificated points and its ramification indexes at these points, can we find holomorphic curves into $\mathbb C\mathbb P^n$ that satisfy the given ramification information?

%The aforementioned map $f$ is called a developing map associated with a solution $u$ of (\ref{e-uis}). The existence of developing map for any solution $u$ is the reason why (\ref{e-uis}) is also considered as an integrable system, see \cite{chern}.  The study of (\ref{e-uis}) from this viewpoint might discover some connection with complex Fuchsian type ODE of high order, algebraic geometry, modular form and a generalization of Painleve equation, as shown for $n=1$. We shall discuss this in a different work.

On the other hand, equation (\ref{e-gen}) is also related to many physical models from gauge field theory. For example, to describe the physics of high critical temperature superconductivity, a model of relative Chern-Simons model was proposed and this model can be reduced to a $n\times n$ system with exponential nonlinearity if the gauge potential and the Higgs field are algebraically restricted. Then the Toda system with (\ref{cartan-m}) is one of the limiting equations if the coupling constant tends to zero. For extensive discussions on the relationship between Toda system and its background in Physics we refer the readers to \cite{bennet,ganoulis,lee2,mansfield,yang2} and the reference therein.

In this article we are concerned with rank $2$ Today systems.  There are three types of Cartan matrices of rank 2:
$$A_2=\left(\begin{array}{cc}
2 & -1 \\
-1 & 2
\end{array}
\right )
\quad B_2(=C_2)=\left(\begin{array}{cc}
 2 & -1 \\
 -2 & 2
 \end{array}
 \right )
 \quad G_2=\left(\begin{array}{cc}
 2 & -1 \\
 -3 & 2
 \end{array}
 \right ).
 $$

One of our main theorems is the following estimate:
\begin{thm}
\label{main-manifold}
Let $(k_{ij})_{2\times 2}$ be one of the matrices above, $h_i$ be positive $C^1$ functions on $M$, $\alpha_t^i\in \mathbb N\cup \{0\}$, $t\in \{1,2,...,N\}$ and $K$ be a compact subset of $M\setminus S$. If $\rho_i\not \in 4\pi \mathbb N$, there exists a constant $C(K,\rho_1,\rho_2)$ such that for any solution $u=(u_1,u_2)$ of (\ref{e-gen})
$$|u_i(x)|\le C, \quad \forall x\in K, \quad i=1,2. $$
\end{thm}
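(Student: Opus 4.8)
The plan is to argue by contradiction. Suppose the asserted bound fails; then there is a sequence $u^k=(u_1^k,u_2^k)$ of solutions of (\ref{e-gen}) with $\max_i\sup_M u_i^k\to+\infty$. Since $\int_M u_i^k\,dV_g=0$ and the total area is $1$, Jensen's inequality gives $\int_M h_ie^{u_i^k}\,dV_g\ge\min_M h_i>0$, so the normalized densities $f_i^k:=\rho_i h_ie^{u_i^k}/\int_M h_ie^{u_i^k}\,dV_g\ge 0$ obey $\int_M f_i^k\,dV_g=\rho_i$. Writing $v_i^k:=u_i^k+\log\bigl(\rho_i/\int_M h_ie^{u_i^k}\,dV_g\bigr)$ recasts (\ref{e-gen}) as $\Delta_g v_i^k+\sum_j k_{ij}\bigl(h_je^{v_j^k}-\rho_j\bigr)=\sum_t 4\pi\alpha_t^i(\delta_{P_t}-1)$ with $f_i^k=h_ie^{v_i^k}$. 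First I would run the Brezis--Merle type concentration--compactness analysis for this system to see that the blow-up set $\Sigma$ is finite, that the local masses $\sigma_i(q):=\frac{1}{2\pi}\lim_{r\to0}\lim_{k\to\infty}\int_{B_r(q)}f_i^k\,dV_g$ are well defined for $q\in\Sigma$, and that $f_i^k\rightharpoonup\sum_{q\in\Sigma}2\pi\sigma_i(q)\delta_q+r_i\,dV_g$ with $r_i\ge0$ and $2\pi\sum_q\sigma_i(q)+\int_M r_i\,dV_g=\rho_i$, where $\sigma(q)=(\sigma_1(q),\sigma_2(q))\neq0$ for every $q\in\Sigma$.

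The crux is then to prove \emph{complete} blow-up for a well-chosen component, i.e.\ $r_i\equiv0$, which yields the global identity $\rho_i=2\pi\sum_q\sigma_i(q)$. Fix $q\in\Sigma$. For each $i$ there is a Harnack dichotomy on the connected set $M\setminus\Sigma$: either $v_i^k\to-\infty$ locally uniformly (so $r_i\equiv0$), or $v_i^k\to\bar v_i$ in $C^2_{loc}(M\setminus\Sigma)$ with $r_i=h_ie^{\bar v_i}\not\equiv0$; this dichotomy is exactly where the Harnack estimates (iii)--(iv) would be used. In the second case, passing to the distributional limit near $q$ gives $\Delta_g\bar v_i+\sum_j k_{ij}h_je^{\bar v_j}=O(1)+4\pi\alpha_q^i\delta_q-2\pi(\mathbf{K}\sigma(q))_i\delta_q$, where $\alpha_q^i=\alpha_t^i$ if $q=P_t\in S$ and $\alpha_q^i=0$ otherwise, and $(\mathbf{K}\sigma(q))_i=\sum_j k_{ij}\sigma_j(q)$. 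Since the $h_je^{\bar v_j}$ are integrable by Fatou, potential theory forces the leading singularity $\bar v_i(x)\sim\bigl(2\alpha_q^i-(\mathbf{K}\sigma(q))_i\bigr)\log|x-q|$, so that $e^{\bar v_i}\approx|x-q|^{2\alpha_q^i-(\mathbf{K}\sigma(q))_i}$. Setting $\mu_i(q):=1+\alpha_q^i$, this density is non-integrable near $q$ precisely when $(\mathbf{K}\sigma(q))_i\ge2\mu_i(q)$, contradicting $\int_M r_i\,dV_g\le\rho_i<\infty$.

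To close the argument I must exhibit, at some $q\in\Sigma$, an index $i$ with $\sigma_i(q)>0$ and $(\mathbf{K}\sigma(q))_i\ge2\mu_i(q)$. I would extract this from the local Pohozaev identity satisfied by the classified masses (the same identity underlying (i)): for $A_2$ it reads $\sigma(q)^{T}\mathbf{K}\sigma(q)=4\sum_i\mu_i(q)\sigma_i(q)$, with analogues for $B_2$ and $G_2$, whence $\sum_i\sigma_i(q)\bigl[(\mathbf{K}\sigma(q))_i-2\mu_i(q)\bigr]=2\sum_i\mu_i(q)\sigma_i(q)>0$. Some summand is therefore positive, producing $i$ with $\sigma_i(q)>0$ and $(\mathbf{K}\sigma(q))_i\ge2\mu_i(q)$ (the borderline case $(\mathbf{K}\sigma(q))_i=2\mu_i(q)$ already giving the non-integrable rate $|x-q|^{-2}$). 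For this $i$ the second alternative above is impossible, so $r_i\equiv0$ and $\rho_i=2\pi\sum_{q'\in\Sigma}\sigma_i(q')$. Now I invoke (i): as every $\alpha_t^j\in\mathbb{N}\cup\{0\}$, each $\mu_j(q')=1+\alpha_{q'}^j$ is a positive integer, so $\sigma_i(q')=2\bigl(N_{i1}\mu_1(q')+N_{i2}\mu_2(q')+N_{i3}\bigr)\in2\mathbb{Z}_{\ge0}$; hence $\rho_i\in4\pi\mathbb{Z}$ and, being positive, $\rho_i\in4\pi\mathbb{N}$, contradicting $\rho_i\notin4\pi\mathbb{N}$. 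This rules out blow-up, and standard elliptic estimates together with $\int_M u_i^k\,dV_g=0$ then promote the uniform bound to the stated estimate on $K$.

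The step I expect to be the main obstacle is exactly this passage to complete blow-up: converting the purely local quantization of (i) into the global identity $\rho_i=2\pi\sum_q\sigma_i(q)$ forces me to exclude a residual regular limit, and this exclusion has to be arranged componentwise, because a single component may both concentrate at $\Sigma$ and leave a nontrivial residue. The Pohozaev positivity is what singles out, at each blow-up point, a component whose residue is incompatible with finite total mass. The integrality hypothesis on the $\alpha_t^i$ is used decisively only at the very end, through (i), to force $\sigma_i\in2\mathbb{Z}$, while the finer $4\pi$ (rather than $8\pi$) spacing reflects the single-component bubbles with $\sigma_i=2$ that the rank two structure permits.
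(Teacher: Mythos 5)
Your overall architecture is the same as the paper's: argue by contradiction, show that at least one component concentrates completely (this is Theorem \ref{one-concen}), deduce the global identity $\rho_i=2\pi\sum_q\sigma_i(q)$ for that component, and then use the quantization of Theorem \ref{thm-main} with integer $\mu$'s to force $\rho_i\in 4\pi\mathbb{N}$. Even your two mechanisms for the concentration step mirror the paper's Lemma \ref{le4.1}: the Fatou/integrability argument showing that a non-concentrating component must satisfy $(\mathbf{K}\sigma(q))_i\le 2\mu_i(q)$ is Lemma \ref{le4.1}(a) (there phrased via scaled limits and the constraint $\beta_i>-1$ for the singular Liouville equation), and your ``Pohozaev positivity'' is Lemma \ref{le4.1}(b) (note that for $B_2$ and $G_2$ you need the weighted identity $k_{21}\sigma_1[(\mathbf{K}\sigma)_1-2\mu_1]+k_{12}\sigma_2[(\mathbf{K}\sigma)_2-2\mu_2]=2k_{21}\mu_1\sigma_1+2k_{12}\mu_2\sigma_2<0$, not your unweighted sum, but the conclusion is the same).

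The genuine gap is the input you feed into this argument: the Pohozaev identity for the pair of Dirac coefficients $(\sigma_1(q),\sigma_2(q))$ at the macroscopic scale, in exactly the situation where one component may fail to concentrate. This is not ``the same identity underlying (i)'': Theorem \ref{thm-main} asserts only the integer-combination form of the masses and carries no P.I. information, while the quotable P.I. (Proposition 3C, from \cite{lwz-apde}) requires \emph{both} components to have fast decay on the boundary of the ball --- precisely what fails for a component with a residue, whose existence is what you are trying to exclude. As written your plan is therefore circular at its crucial step: it invokes the P.I. to rule out residuals, while the available proof of that P.I. presupposes the decay control that residuals destroy. In the paper, the statement you need (that the limiting local masses satisfy the P.I. even when one component has slow decay; conclusion (2) in Section 6) is an \emph{output} of the multi-scale induction: Lemmas \ref{le4.1}--\ref{le4.2} and the group analysis of Sections 5--6 are used to manufacture intermediate radii $l_k\to 0$ on which both components do have fast decay and whose masses already agree with the Dirac coefficients up to $o(1)$, and only there is Proposition 3C applied; the identity is then propagated outward scale by scale. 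One could instead try to prove the macroscopic P.I. directly (a limit Pohozaev computation with good-radius selection, controlling the boundary contributions of the residual and of the potential of its $L^1$ density), but that is genuine work of the same order, not a citation --- so the step you flagged as ``the main obstacle'' is indeed the obstacle, only it sits one layer deeper than you place it: not in converting local quantization into the global identity, but in justifying the local Pohozaev identity itself without already knowing which components decay fast.
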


Our proof of Theorem \ref{main-manifold} is based on the analysis of the behavior of solutions $u^k=(u_1^k,u_2^k)$ near each blowup point. A point $p\in M$ is called a blowup point if we write
$\tilde u_i^k(x)=u_i^k(x)+4\pi \sum_t\alpha_t^kG(x,p_t)$, where $G(x,y)$ is the Green's function of the Laplacian operator on $M$ with singularities at $y\in M$, and there exists a sequence of points $p_k\to p$ such that $\max_{i=1,2}\{u_1^k(p_k),u_2^k(p_k)\}\to \infty$.

%For each $p\in M$ we define the local mass $(\sigma_1(p),\sigma_2(p))$ at $p$ as %follows:
%\begin{equation*}\label{loc-mass-p}
%\sigma_i(p)=\lim_{r\to 0}\lim_{k\to \infty}\frac{1}{2\pi}\int_{B(p,r)}\frac{h_i %e^{u_i^k}}{\int_M h_i e^{u_i^k}dV_g}, \quad i=1,2.
%\end{equation*}

Suppose $u^k$ is a sequence of solutions of (\ref{e-gen}). When $n=1$, it has been proved that if $u^k$ blows up somewhere, the mass distribution $\frac{\rho he^{u^k}}{\int_M he^{u^k}}$ will concentrate, that is,
\begin{equation*}
\label{d-concen}
\frac{\rho he^{u^k}}{\int_M he^{u^k}}\to \sum_{i=1}^S m_i \delta_{p_i}, \quad \mbox{ as } k \to \infty,
\end{equation*}
Which is equivalent to the fact that $u^k(x)\to -\infty$ if $x$ is not a blowup point. This ``blowup implies concentration" was first noted by Brezis-Merle \cite{bm} and was later proved by Li \cite{licmp}, Li-Shafrir \cite{li-shafrir} and Bartolucci-Tarantello \cite{bart2}. But for $n\ge 2$, this phenomenon might fail in general. A component $u_i^k$ is called not concentrating if $u_i^k\not \to -\infty$ away from blowup points, or equivalently, $\tilde u_i^k$ converges to some smooth function $w_i$ away from blowup points. It is natural to ask whether it is possible to have all components not concentrating. For $n=2$, we prove it is impossible.

\begin{thm}\label{one-concen}
Suppose $u^k$ is a sequence of blowup solutions of a rank $2$ Toda system (\ref{e-gen}). Then at least one component of $u^k$ satisfies $u_i^k(x)\to -\infty$ if $x$ is not contained in the blowup set.
\end{thm}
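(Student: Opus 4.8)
The plan is to argue by contradiction: I will suppose that \emph{both} components are non-concentrating and show that then no blowup can occur. By the equivalence recorded just before the statement, non-concentration of $u_i^k$ means that the regularized sequence $\tilde u_i^k$ converges in $C^2_{loc}$ away from the blowup set, equivalently that the total masses $m_i^k=\int_M h_ie^{u_i^k}\,dV_g$ stay bounded. Fixing a blowup point $p$ and using the Green's representation on $M$ (with $\int_M u_i^k=0$), boundedness of both masses forces $u_i^k$ to converge away from the blowup set to a limit $U_i$ whose only singularity at $p$ is logarithmic, $U_i(x)=-\frac{1}{2\pi}\big(\sum_j k_{ij}\sigma_j(p)\big)\log|x-p|+O(1)$, where $\sigma_j(p)$ is the local mass of the $j$-th component at $p$. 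The key one-directional criterion is: if $\gamma_i(p):=\sum_j k_{ij}\sigma_j(p)\ge 4\pi$ for some $i$ and some $p$, then $e^{U_i}$ is not locally integrable, so $m_i^k\to\infty$ and the $i$-th component \emph{does} concentrate. Hence the contradiction hypothesis forces $\gamma_1(p),\gamma_2(p)<4\pi$ at every blowup point, while blowup guarantees $(\sigma_1(p),\sigma_2(p))\neq(0,0)$ for some $p$.

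The heart of the argument is a Pohozaev identity pinning down the admissible local masses. I would work with the combinations $\phi_i^k=\sum_j k^{ij}u_j^k$, which have strictly positive coefficients since $\mathbf K^{-1}$ has positive entries for each of $A_2,B_2,G_2$, and which satisfy $-\Delta\phi_i^k=\hat h_ie^{u_i^k}+(\text{bounded})$, so each $\phi_i^k$ is essentially superharmonic with a concentrating source of mass proportional to $\sigma_i(p)$. Applying the Pohozaev identity on small balls $B_r(p)$ and letting $k\to\infty$ and then $r\to0$, the boundary integrals converge—this is precisely where non-concentration is used, as it supplies the $C^2$-convergence of $u_i^k$ on $\partial B_r(p)$—and their leading behaviour is governed solely by the logarithmic parts $\gamma_i(p)$, while the bulk terms reproduce the local masses. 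This yields a quadratic relation $Q(\sigma_1,\sigma_2)=L(\sigma_1,\sigma_2)$, with $Q$ the suitably symmetrized quadratic form attached to the Cartan matrix and $L$ linear (modified by the integer strengths $\alpha_t^i$ when $p$ is a vortex), separately for each of the three types.

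Finally I would run the algebra on this relation. For $A_2$ one parametrizes the ellipse $\sigma_1^2-\sigma_1\sigma_2+\sigma_2^2=2(\sigma_1+\sigma_2)$ (in units where masses are measured in multiples of $2\pi$) by $\sigma_2=t\sigma_1$ and checks that $\gamma_1<2$ forces $t$ into the range $2t^2-2t-1>0$ and $\gamma_2<2$ into the disjoint range $t^2+2t-2<0$, so the two inequalities cannot hold together at any nonzero point of the curve; the analogous computation is carried out on the conics associated with $B_2$ and $G_2$. Thus every nonzero admissible $(\sigma_1,\sigma_2)$ satisfies $\max_i\gamma_i(p)\ge 4\pi$, contradicting the first step, and therefore at least one component must concentrate.

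I expect the main obstacle to be the rigorous derivation of the Pohozaev relation: a priori the blowup near $p$ could involve several bubbling scales, and one must guarantee that the boundary terms in the identity are accounted for entirely by the limiting logarithmic singularities, with no mass escaping. Controlling this is exactly where the Harnack-type estimates (items (iii)--(iv) of the abstract) are needed; a secondary and more routine difficulty is verifying the final algebraic dichotomy uniformly across $A_2$, $B_2$, $G_2$ and for all admissible singular sources $\alpha_t^i$.
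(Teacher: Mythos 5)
Your route is genuinely different from the paper's, and its skeleton is the natural ``one--scale'' alternative: assume both components fail to concentrate, pass to limits $U_i$ away from the blowup set, use integrability of $e^{U_i}$ to force $\gamma_i:=\sum_j k_{ij}\sigma_j\le 2\mu_i$ (in the paper's normalization $\sigma_i=\lim\frac{1}{2\pi}\int h_ie^{u_i^k}$, $\mu_i=1+\alpha_i$), and then contradict the Pohozaev identity (\ref{pi-3}). The algebraic endgame is correct and can in fact be done uniformly for $A_2,B_2,G_2$: with $D=\mathrm{diag}(|k_{21}|,|k_{12}|)$ the identity (\ref{pi-3}) reads $\frac12\langle DK\sigma,\sigma\rangle=2\langle D\mu,\sigma\rangle$, while $\sigma\ge0$ and $K\sigma\le 2\mu$ componentwise give $\frac12\langle DK\sigma,\sigma\rangle\le\langle D\mu,\sigma\rangle$; hence $\langle D\mu,\sigma\rangle\le0$ and $\sigma=0$, contradicting that $p$ is a blowup point. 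Note, however, that your own $A_2$ computation has a slip: the ranges $2t^2-2t-1>0$ and $t^2+2t-2<0$ are \emph{not} disjoint (they overlap for $t<0$), so you must invoke $\sigma_i\ge0$ to discard those points, which your outline never does. Also, your threshold claim at equality $\gamma_i=2\mu_i$ is unjustified: for Toda the regular part of $U_i$ is not superharmonic (only the $K^{-1}$-combinations $\phi_i$ are, and $U_i$ recombines them with negative coefficients), so at the borderline $e^{U_i}$ may remain integrable; this is reparable because circle-average/Jensen arguments give the non-strict bound $\gamma_i\le2\mu_i$, which the algebra above tolerates.

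The genuine gap is the step you flag and then wave away: the validity of (\ref{pi-3}) for $(\sigma_1,\sigma_2)$ under the non-concentration hypothesis. The paper's Pohozaev identity, Proposition 3C, is proved only when both components have \emph{fast decay} on the boundary circle --- exactly the opposite of your situation, where by hypothesis both components converge (slow decay) on every fixed circle $\partial B_r(p)$. Your proposed fix, quoting the Harnack estimates (iii)--(iv) of the abstract, is circular: those are Corollary 1.5 and Theorem 1.6 of this very paper, they are established by the same multi-scale machinery (Sections 3--6) that yields Theorem \ref{one-concen} itself, and they hold partly under extra hypotheses (non-special masses, or the $Q$-condition), so they cannot be used as inputs here. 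What your proof actually requires is a self-contained potential-theoretic argument on the limit: writing $U_i=(2\alpha_i-\gamma_i)\log|x-p|+w_i$ with $\Delta w_i=-\sum_jk_{ij}h_je^{U_j}\in L^1$, one must show that boundary terms such as $\int_{\partial B_s}|\nabla w_i|\,dS$ and $s\int_{\partial B_s}|\nabla w_i|^2\,dS$ vanish along a pigeonholed sequence $s_n\to0$, via Brezis--Merle/Newtonian-potential estimates. This is doable, but it is the real content of the proof and is absent from your outline. For contrast, the paper never needs a Pohozaev identity on a slow-decay circle: the selection process (Proposition 3A) supplies small scales where both components decay fast, Proposition 3C applies there for free, and Lemma \ref{le4.1}(b) --- whose proof is essentially your algebra, in the form (\ref{fourpi}) --- together with Lemma \ref{le4.2} and the induction of Sections 5--6 propagates ``at least one component decays fast'' outward to the fixed scale, which is exactly Theorem \ref{one-concen}.
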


The first example of such non-concentration phenomenon was first proved by Lin-Tarantello \cite{lin-tarantello}. The new phenomenon makes the study of systems $(n\ge 2)$ much more difficult than the mean
field equation ($n=1$). We conjecture that Theorem \ref{one-concen} also holds for $n\ge 3$. This will be studied in a forthcoming project.

As mentioned before our proofs of Theorem \ref{main-manifold} and Theorem \ref{one-concen} are based on the asymptotic behavior of local bubbling solutions. For simplicity we set up the situation as follows:

Let $u^k=(u_1^k,u_2^k)$ be a sequence of solutions of
\begin{equation}\label{toda-b2g2}
\Delta u_i^k +\sum_{j=1}^2 k_{ij} h_j^k e^{u_j^k}=4\pi \alpha_i\delta_0, \quad \mbox{ in }\quad B(0,1),\quad i=1,2,
\end{equation}
where $\alpha_{i} > -1$. $B(0,1)$ is the unit ball in $\mathbb R^2$ ( we use $B(p,r)$ to denote the ball with centered $p$ and radius
$r$). Throughout of the paper, $h_1^k,h_2^k$ are smooth functions satisfying $h_1^k(0)=h_2^k(0)=1$ and
\begin{equation}\label{ah}
\frac 1C\le h_i^k\le C, \quad \|h_i^k\|_{C^1(B(0,1))}\le C, \quad \mbox{in } B(0,1), \quad  i=1,2.
\end{equation}
For solutions $u^k=(u_1^k,u_2^k)$ we assume:
\begin{equation}\label{asm-u}
\left\{\begin{array}{ll}
 (i): \mbox{ $0$ is the only blowup point of $u^k$},
\\
(ii): |u_i^k(x)-u_i^k(y)|\le C, \quad \forall x,y\in \partial B(0,1),\,\, i=1,2,
\\
(iii): \int_{B(0,1)}h_i^k e^{u_i^k}\le C, \quad i=1,2.
\end{array}
\right.
\end{equation}

For this sequence of blowup solutions we define the local mass by
\begin{equation}\label{12sep2e3}
\sigma_i=\lim_{r\to 0}\lim_{k\to \infty} \frac{1}{2\pi} \int_{B(0,r)} h_i^k e^{u_i^k},\quad i=1,2.
\end{equation}

It is known that $0$ is a blowup point if and only if $(\sigma_{1},\sigma_{2}) \neq (0,0)$. The proof is to use ideas from \cite{bm} and has become standard now. We refer the readers to \cite{lee} for a complete proof. One important property of $(\sigma_1,\sigma_2)$ is the so-called Pohozaev identity (P.I. in short):
\begin{equation}\label{pi-3}
k_{21}\sigma_{1}^{2} + k_{12}k_{21}\sigma_{1}\sigma_{2} + k_{12}\sigma_{2}^{2} = 2k_{21}\mu_{1}\sigma_{1}+2k_{12}\mu_{2}\sigma_{2},
\end{equation}
where $\mu_i=1+\alpha_i$. Take $A_{2}$ as an example, the P.I. is
\begin{equation*}
\sigma_{1}^{2} - \sigma_{1}\sigma_{2} + \sigma_{2}^{2} = 2 \mu_{1}\sigma_{1} + 2\mu_{2}\sigma_{2}.
\end{equation*}

The proof of (\ref{pi-3}) was given in \cite{lwz-apde}. At first sight, (\ref{pi-3}) seems not very useful to determine the local mass. In \cite{lwz-apde} we
initiated an algorithm to calculate all the possible (finitely many) values of local masses. The P.I. plays one of important roles. But the argument there seems not very efficient. In this work we develop further our original approach to sharpen the result:

\begin{thm} \label{thm-main} Suppose $\sigma_1$ and $\sigma_2$ are local masses of a sequence of blowup solutions of (1.4) such that (1.6) holds. Then $\sigma_i$ can be written as
$$\sigma_i=2(N_{i,1}\mu_1+N_{i,2}\mu_2+N_{i,3}),\quad i=1,2,$$
for some $N_{i,1},N_{i,2}, N_{i,3}\in \mathbb{Z}$ ($i=1,2$).
\end{thm}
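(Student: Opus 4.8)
The plan is to reconstruct the local masses by a finite ``bubble--peeling'' analysis of the concentration profile at $0$, carried out from the finest scale outward, with the Pohozaev identity (\ref{pi-3}) used at every scale to quantize the mass increments. The central bookkeeping device is the partial mass
\[
\sigma_i(r)=\frac{1}{2\pi}\int_{B(0,r)}h_i^k e^{u_i^k},\qquad i=1,2,
\]
together with the flux identity obtained by integrating (\ref{toda-b2g2}) over $B(0,r)$ and applying the divergence theorem,
\[
\frac{1}{2\pi}\int_{\partial B(0,r)}\partial_\nu u_i^k=2\alpha_i-\sum_{j=1}^2 k_{ij}\,\sigma_j(r).
\]
Thus, outside the current concentration scale the radial average of $u_i^k$ decays like $\beta_i(r)\log|x|$ with $\beta_i(r)=2(\mu_i-1)-\sum_j k_{ij}\sigma_j(r)$; equivalently, seen from a larger scale the inner profile behaves as an entire solution carrying an \emph{effective} singular strength $\tilde\alpha_i(r)=\tilde\mu_i(r)-1$, where
\[
\tilde\mu_i(r)=\mu_i-\tfrac12\sum_{j=1}^2 k_{ij}\,\sigma_j(r).
\]
The whole argument amounts to proving that $\tilde\mu_i(r)$, and hence every mass increment, stays inside the lattice $\{N_1\mu_1+N_2\mu_2+N_3:\ N_\ell\in\mathbb Z\}$.

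For the base of the induction I would rescale at the innermost bubbling scale centered at $0$. The rescaled sequence converges, away from its own lower-order blowup set, to an entire solution of the singular rank-two Toda system on $\mathbb R^2$ with singular strengths $(\alpha_1,\alpha_2)$. Invoking the classification of such entire solutions---Lin--Wei--Ye for $A_2$ and the corresponding statements for $B_2$ and $G_2$---its masses are explicitly of the form $2\bigl(N_{i,1}\mu_1+N_{i,2}\mu_2+N_{i,3}\bigr)$; this is the step that produces the $\mu$-dependence and fixes the factor $2$.

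For the inductive step I scan outward. A fast-decay (``neck'') estimate on the annuli between consecutive bubbling scales shows that no mass accumulates there, so $\sigma_i(r)$ is piecewise constant and jumps only at finitely many scales. At each such scale there are two cases. If the new bubble is centered away from $0$, its rescaled limit is a \emph{regular} entire Toda solution whose mass is an even integer, i.e.\ a pure $2N_{i,3}$ contribution. If the concentration persists at $0$, the rescaled limit is a singular entire solution whose effective strengths are exactly $\tilde\alpha_i(r)$ read off from the flux identity; since $\sigma_j(r)=2(\text{integer combination})$ by the inductive hypothesis and the $k_{ij}$ are integers, $\tilde\mu_i(r)$ is again an integer combination of $\mu_1,\mu_2,1$, and the classification gives a mass increment of the form $2(\text{integer combination of }\tilde\mu_1,\tilde\mu_2,1)=2(\text{integer combination of }\mu_1,\mu_2,1)$. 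In either case the Pohozaev identity (\ref{pi-3}), applied at the relevant scale with $\mu_i$ replaced by $\tilde\mu_i(r)$, restricts the admissible increment to one of finitely many quantized values, each of the stated lattice form. Because this lattice is closed under addition, summing the finitely many increments yields $\sigma_i=\sigma_i(r)$ for all sufficiently small $r$, which therefore lies in the lattice, as claimed.

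The main obstacle I expect is twofold. First, the neck/fast-decay analysis guaranteeing finiteness of the scales and the absence of residual mass in the annular regions is delicate, especially where only one of the two components concentrates at a given scale (a genuinely system-level phenomenon, controlled with the help of Theorem \ref{one-concen}). Second, for the non-symmetric matrices $B_2$ and $G_2$ the classification of entire solutions and the verification that the effective strengths remain integral are more involved; here the Pohozaev identity (\ref{pi-3}) does the essential work of cutting the continuum of conic solutions of the quadratic constraint down to the finite, lattice-valued set at each stage.
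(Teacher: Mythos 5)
Your outward-scanning scheme (necks carry no mass, quantized jumps at finitely many scales, lattice closed under addition) is indeed the architecture of the paper's proof in Sections 5--6, and your flux/effective-strength bookkeeping matches the paper's $\tilde\alpha_1=\alpha_1-\hat\sigma_{1,1}+\frac12\hat\sigma_{2,1}$. But there is a genuine gap at the single point where all the number-theoretic content enters: you invoke, at each inductive step, ``the classification'' of the rescaled limit, and no such classification exists for the limits that actually arise. When concentration persists at $0$ (and likewise when a group of bubbles away from $0$ is enclosed at a larger scale), the rescaled limit is \emph{not} an entire Toda system solution: one component escapes to $-\infty$ and the limit is a \emph{scalar} Liouville equation with \emph{several} point singularities --- one of possibly non-integer strength $\tilde\alpha_i$ at the origin and integer strengths at the images $p_j$ of the other bubbling groups (equations (5.7) and (6.4) in the paper). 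Solutions of the multi-singularity Liouville equation are not classified, and their total mass is not given by any formula; what is true, and what the paper must prove from scratch (Theorems 2.1 and 2.2), is a mass \emph{quantization}: if all singularities are positive integers the mass is $4\pi\cdot(\text{even integer})$, proved by realizing the developing map as a meromorphic function on $\mathbb{S}^2$ and counting its degree; and with exactly one extra singularity $\alpha_0>-1$ the mass lies in $\{8\pi k\}\cup\{8\pi(\alpha_0+1)/2+\dots\}$, i.e.\ $\frac1{4\pi}\int e^u\in 2\mathbb{Z}\cup\bigl(2(\alpha_0+1)+2\mathbb{Z}\bigr)$, proved via the monodromy of the associated ODE $y''=I(z)y$. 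This quantization is valid \emph{only because} all but one singularity are integers, a fact that must itself be propagated through the induction (the group masses away from $0$ are even integers, so the induced strengths $\tilde n_{1,j}=-\frac12\sum_l k_{1l}m_{l,j}$ are integers). Your proposal treats all of this as a known classification result, which is precisely the missing idea.

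A second, related problem: for $B_2$ and $G_2$ there is no classification of entire \emph{singular} Toda solutions to invoke, even at the base step. The paper needs a genuine Toda classification only in the one case where both rescaled components converge (fully bubbling), where it quotes \cite{lin-wei-ye} (with the $B_2$, $G_2$ cases handled by folding into higher-rank $A_n$ systems, cf.\ \cite{lin-zhang-b2g2}); in every other case it deliberately avoids classification by combining the scalar quantization above with the Pohozaev identity used as a quadratic root-swapping device (with one mass fixed, (\ref{pi-3}) is quadratic in the other, and the old and new masses are its two roots). Your suggestion to apply (\ref{pi-3}) ``with $\mu_i$ replaced by $\tilde\mu_i(r)$'' gestures at this, but as stated it is an unproved variant: the paper's Proposition 3C establishes the identity only for the actual masses with the actual $\mu_i$ on balls where both components decay fast, and the algebraic consequences are then extracted from that version. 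To repair your argument you would need to (i) replace every appeal to classification of multi-singularity limits by the developing-map/monodromy quantization, and (ii) carry the integrality of the off-origin local masses as part of the induction hypothesis so that the quantization theorem is applicable at each stage.
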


Theorem 1.3 is proved in section 5 and section 6. In section 5, we give an explicit procedure to calculate the local masses. Take $A_{2}$ system as an example, we start with $\alpha_{1}=0$ and the P.I. gives $\sigma_{2} = 2 \mu_{2}$. With $\sigma_{2} = 2 \mu_{2}$, the P.I. gives $\sigma = 2\mu_{1} + 2\mu_{2}$ and so on. Let $\Gamma(\mu_{1},\mu_{2})$ be the set obtained by the above algorithm. Then $\Gamma(\mu_1,\mu_2)$ is equal to the following set,

\begin{equation*}
\begin{aligned}
&\text{(i)}\ (2\mu_{1},0), (2\mu_{1},2\mu_{1}+2\mu_{2}), (2\mu_{1}+2\mu_{2},2\mu_{1}+2\mu_{2}), (2\mu_1+2\mu_2,2\mu_2),(0,2\mu_2)\ \text{for}\ A_{2},\\[0.1cm]
&\text{(ii)}\ (2\mu_{1},0), (2\mu_{1},4\mu_{1}+2\mu_{2}), (4\mu_{1}+2\mu_{2},4\mu_{1}+2\mu_{2}), (4\mu_{1}+2\mu_{2},4\mu_{1}+4\mu_{2}),\\
& \quad \ \   (0,2\mu_{2}),(2\mu_{1}+2\mu_{2},2\mu_{2}),(2\mu_{1}+2\mu_{2},4\mu_{1}+4\mu_{2}),\ \text{for}\ B_{2},\\[0.1cm]
&\text{(iii)}\ (2\mu_{1},0), (2\mu_{1},6\mu_{1}+2\mu_{2}), (6\mu_{1}+2\mu_{2},6\mu_{1}+2\mu_{2}), (6\mu_{1}+2\mu_{2},12\mu_{1}+6\mu_{2}),\\
& \quad \ \ \ (8\mu_{1}+4\mu_{2},12\mu_{1}+6\mu_{2}),(8\mu_{1}+4\mu_{2},12\mu_{1}+8\mu_{2}), (0,2\mu_{2}), (2\mu_{1}+2\mu_{2},2\mu_{2}),\\
& \quad \ \ \   (2\mu_{1}+2\mu_{2},6\mu_{1}+6\mu_{2}), (6\mu_{1}+4\mu_{2},6\mu_{1}+6\mu_{2}),(6\mu_{1}+4\mu_{2},12\mu_{1}+8\mu_{2}),\ \text{for}\ G_{2}.
\end{aligned}
\end{equation*}
\\

\noindent{\bf Definition 1.4.} {\em
A pair of local masses $(\sigma_1,\sigma_2)\in\Gamma(\mu_1,\mu_2)$ is called special if
\begin{equation*}
(\sigma_1,\sigma_2)=\left\{\begin{array}{ll}
(2\mu_1+2\mu_2,2\mu_1+2\mu_2)~&\mathrm{for}~A_2,\\
(4\mu_1+2\mu_2,4\mu_1+4\mu_2)~&\mathrm{for}~B_2,\\
(8\mu_1+4\mu_2,12\mu_1+8\mu_2)~&\mathrm{for}~G_2.
\end{array}\right.
\end{equation*}}
\\

\indent The analysis of local solutions in \cite{lwz-apde} is a method to pick up these points $\Gamma_{k} = \{ 0,x_{1}^{k}, \cdots , x_{N}^{k} \}$ (if $0$ is a singular point, otherwise $0$ can be deleted from $\Gamma_k$) such that a tiny ball $B(x_{i}^{k},l_{j}^{k})$ can contribute an amount of mass (which is quantized), and the following Harnack-type inequality holds:
\begin{equation}
\label{1.harnack}
u_{i}^{k}(x) + 2\log\text{dist}\ (x,\Sigma_{k}) \leqslant C,\ \forall x \in B(0,1).
\end{equation}

When $\alpha_1=\alpha_2=0$, we can use Theorem 1.3 to calculate all the pairs of even positive integers of (\ref{pi-3}). It turns out the set of solution of (\ref{pi-3}) to be the same as $\Gamma(1,1)$.
\medskip

\noindent{\bf Corollary 1.5.} {\em
Suppose $\alpha_1=\alpha_2=0$. Then $(\sigma_{1},\sigma_{2}) \in \Gamma(1,1)$. Furthermore if $(\sigma_1,\sigma_2)$ is not special, then $\Sigma_k=\{x_1^k\}$ and}
$$u_i^k(x)+2\log|x-x_1^k|\leq C,\quad i=1,2.$$

It is interesting to see whether any pair of the above is really the local masses of some sequence of blowup solutions of (\ref{e-gen}). For $K=A_2$ the existence of such a local solution has been obtained (see \cite{wei} and \cite{lin-yan1}). We remark that parts of Corollary 1.5 was already proved by Jost-Lin-Wang \cite{jostlinwang}, and by the first author and the fourth author in \cite{lin-zhang-b2g2}.

After $\Sigma_k$ is picked up, the difficulty at the next step is how to calculate the mass contributed from outside $B(x^{k}_{j},l_{j}^{k})\ j=1,2, \cdots ,N$. In section 6, we see that the mass outside of this union could be very messy. However, if ($\alpha_{1},\alpha_{2}$) satisfies the $Q$-condition:
\begin{equation*}
[Q]\ \ \alpha_{1},\alpha_{2}\ \textit{and}\ 1\ \textit{are linearly independent over}\ Q. \qquad \qquad \qquad \qquad \qquad \qquad  \qquad
\end{equation*}
Then the result can be stated cleanly as follows.
\medskip

\noindent{\bf Theorem 1.6.}
{\em Suppose ($\alpha_{1},\alpha_{2}$) satisfies the $Q$-condition. Then ($\sigma_{1},\sigma_{2}$) $\in$ $\Gamma$($\mu_{1},\mu_{2}$). Furthermore, the Harnack-type inequality holds:
\begin{equation*}
u^{k}_{i}(x) + 2\log|x| \leqslant C\quad\text{for}\ x \in B(0,1).
\end{equation*}}
\medskip

For (\ref{e-gen}), let $\mu_{1,t}=\alpha_t^1+1$ and $\mu_{2,t}=\alpha_t^2+1$ at a vortex point $p_t\in S$, and define
\begin{equation}
\label{1-gamma}
\Gamma_{i} = \{4\pi(\Sigma_{t \in J}\sigma_{i,t} + n)\ |\ (\sigma_{1,t},\sigma_{2,t}) \in \Gamma(\mu_{1,t},\mu_{2,t}), J \subseteq S,\ n \in \mathbb{N} \cup \{0 \} \}.
\end{equation}
Together with Theorem 1.6, Theorem \ref{main-manifold} can be extended:
\medskip

\noindent{\bf Theorem 1.7.}
{\em Let $h_{i}$ be positive $C^{1}$ functions on $M$, and $K$ be a compact set in $M$. If either both $\alpha_{1}$ and $\alpha_{2}$ are integers or ($\alpha_{1},\alpha_{2}$) satisfies the $Q$-condition and $\rho_{i} \notin \Gamma_{i}$ for $i=1,2$, then there exists a constant $C$ such that
\begin{equation*}
| u_{i}(x) | \leqslant C\ \ \ \forall x \in K.
\end{equation*}}

The organization of this article is as follows. In Section 2 we establish the global mass for the entire solutions of some singular Liouville equation defined in $\mathbb{R}^2$. Then in Section 3 we review some fundamental tools proved in the previous work \cite{lwz-apde}. In section four we present two crucial lemmas, which play the key role in the proof of main results. Then in section 5 and section 6 we discuss the local mass on each bubbling disk centered at $0$ and not at $0$ respectively, thereby we prove all the results.

\section{Totoal mass for liouville equation}
The main purpose of this section is to prove an estimate of the total mass of a solution of the following equation:
\begin{equation}\label{entire-u}
\left\{\begin{array}{ll}
\Delta u+e^u=\sum_{j=1}^N 4\pi \alpha_i\delta_{p_i}, \quad \mbox{ in }\quad \mathbb R^2, \\
\\
\int_{\mathbb R^2}e^u<\infty,
\end{array}
\right.
\end{equation}
where $p_1,...,p_N$ are distinct points in $\mathbb R^2$ and $\alpha_{i} > -1,\ \forall\ 1 \leqslant i \leqslant N$.

\begin{thm}\label{global-energy}
Suppose $u$ is a solution of (\ref{entire-u}) and
$\alpha_1,...,\alpha_N$ are positive integers. Then
$\displaystyle{\frac 1{4\pi}\int_{\mathbb R^2}e^u}$ is an even integer.
\end{thm}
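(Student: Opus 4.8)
The plan is to recover from $u$ a \emph{developing map} into $\mathbb{C}\mathbb{P}^1$ and to read off the total mass as twice the degree of this map. Writing $z=x_1+ix_2$ and $\Delta=4\partial_z\partial_{\bar z}$, the conformal metric $e^u|dz|^2$ has constant curvature away from the $p_j$ and a conic singularity of angle $2\pi(1+\alpha_j)$ at each $p_j$. First I would introduce the holomorphic ``stress'' term
\[
c(z)=u_{zz}-\tfrac12 u_z^2 ,
\]
which satisfies $\partial_{\bar z}c=u_{z\bar z}u_z-u_zu_{z\bar z}=0$ on $\mathbb{C}\setminus\{p_1,\dots,p_N\}$, since $u_{z\bar z}=-\tfrac14 e^u$ there. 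From the local expansion $u=2\alpha_j\log|z-p_j|+O(1)$ one finds that $c$ has a double pole at $p_j$ with principal part $-\tfrac{\alpha_j(\alpha_j+2)}{2}(z-p_j)^{-2}$. I would then define the a priori multivalued developing map as $f=y_1/y_2$, where $y_1,y_2$ are linearly independent solutions of $y''+\tfrac12 c\,y=0$; a direct computation recovers the Liouville representation $e^u=\dfrac{8|f'|^2}{(1+|f|^2)^2}$, i.e.\ $f$ pulls back the Fubini--Study metric of $\mathbb{C}\mathbb{P}^1$ to $e^u|dz|^2$.

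The decisive step is to show $f$ is single-valued, and here the integrality of the $\alpha_j$ enters. The indicial roots of $y''+\tfrac12 c\,y=0$ at $p_j$ are $1+\tfrac{\alpha_j}{2}$ and $-\tfrac{\alpha_j}{2}$, with difference $\alpha_j+1\in\mathbb{N}$; geometrically this says the cone angle $2\pi(1+\alpha_j)$ is an integer multiple of $2\pi$. Because $u$ is a genuine conic metric near $p_j$ (the function $u-2\alpha_j\log|z-p_j|$ stays bounded), the regular singular point is \emph{apparent}, so no logarithmic term occurs and the local model is $f(z)\sim(z-p_j)^{1+\alpha_j}$. Hence the local monodromy around each $p_j$ is trivial in $PSU(2)$. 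Since $\pi_1(\mathbb{C}\setminus\{p_1,\dots,p_N\})$ is generated by the loops around the $p_j$, the global monodromy is trivial and $f$ descends to a single-valued meromorphic function on all of $\mathbb{C}$.

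Finally I would use $\int_{\mathbb{R}^2}e^u<\infty$ to control $f$ at infinity. The finite-mass asymptotics give $u(z)=\bigl(2\sum_j\alpha_j-\tfrac{1}{2\pi}\int e^u\bigr)\log|z|+O(1)$ as $|z|\to\infty$, which forces $f$ to extend meromorphically across $\infty$; thus $f$ is a rational map $\mathbb{C}\mathbb{P}^1\to\mathbb{C}\mathbb{P}^1$ of some degree $d\ge1$. Since the pullback of the (total area $4\pi$) Fubini--Study form integrates to $4\pi d$ by degree counting, we conclude
\[
\frac{1}{4\pi}\int_{\mathbb{R}^2}e^u=\frac{1}{4\pi}\int_{\mathbb{C}}\frac{8|f'|^2}{(1+|f|^2)^2}\,dA=2d\in 2\mathbb{Z}.
\]

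I expect the main obstacle to be the two global issues hidden in this construction. The first is verifying that each $p_j$ is an apparent singularity of $y''+\tfrac12 c\,y=0$, i.e.\ that the exponent difference $\alpha_j+1\in\mathbb{N}$ does not produce logarithmic monodromy; this is precisely where integrality of $\alpha_j$ must be combined with the conic regularity of $u$ to force trivial monodromy. The second is controlling the behavior at $\infty$ sharply enough to conclude that $f$ is rational rather than merely meromorphic on $\mathbb{C}$, which relies on the finite-energy hypothesis and a Brezis--Merle type asymptotic analysis. Both are standard in the theory of spherical cone metrics, but they are the steps that require genuine care.
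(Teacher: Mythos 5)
Your proposal is correct and follows essentially the same route as the paper: represent $e^u$ via a developing map $f$ into $\mathbb{C}\mathbb{P}^1$, use integrality of the $\alpha_j$ to make $f$ single-valued, use the finite-mass asymptotics $u(z)=-2\alpha_\infty\log|z|+O(1)$ to extend $f$ meromorphically across $\infty$, and conclude $\frac{1}{4\pi}\int_{\mathbb{R}^2}e^u=2\deg f$ by pulling back the Fubini--Study area form. The only differences are presentational: the paper cites the Liouville representation and the single-valuedness directly (your Schwarzian/ODE monodromy argument is exactly the machinery the paper deploys later for Theorem 2.2), and your normalization $e^u=8|f'|^2/(1+|f|^2)^2$ is in fact the correct constant for $\Delta u+e^u=0$ (the paper's factor $4$ is a harmless slip that does not affect the degree count).
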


\begin{proof}
It is known that any solution $u$ of (\ref{entire-u}) has the following asymptotic behavior at infinity:
\begin{equation}\label{u-inf}
u(z)=-2\alpha_{\infty}\log |z|+O(1), \quad \alpha_{\infty}>1,
\end{equation}
and $u$ satisfies
\begin{equation}\label{u-integral}
\frac{1}{2\pi}\int_{\mathbb R^2}e^udx=2\sum_{i=1}^N \alpha_i +2\alpha_{\infty}.
\end{equation}
We shall prove that $\alpha_{\infty}+\sum_{i=1}^N\alpha_i$ is an even integer.
A classical Liouville theorem ( see \cite{lin-cheng} ) says that, $u$ can be written as
\begin{equation}\label{liou-1}
u=\log \frac{4|f'(z)|^2}{(1+|f(z)|^2)^2}, \quad z\in \mathbb{R}^{2},
\end{equation}
for some meromorphic function $f$. In general, $f(z)$ is multi-valued and any vertex $p_i$ is a branch point. However if $\alpha_i\in \mathbb N\cup \{0\}$, $f(z)$ is single-valued. Furthermore (\ref{u-inf}) implies that $f(z)$ is meromorphic at infinity. Hence for any solution $u$ of (\ref{entire-u}) there is a meromorphic functon $f$ on $\mathbb S^2=\mathbb C\cup \{\infty\}$ such that (\ref{liou-1}) holds.
Then
\begin{equation*}
\begin{aligned}
4\pi(\sum_{j=0}^N\alpha_j+\alpha_{\infty})=\int_{\mathbb R^2}e^u
&=4\int_{\mathbb R^2}\frac{|f'(z)|^2}{(1+|f(z)|^2)^2}dxdy \\
&=4(deg f)\int_{\mathbb R^2}\frac{d\tilde x d\tilde y}{(1+|w|^2)^2}
= 8\pi(deg f),
\end{aligned}
\end{equation*}
where $deg(f)$ is the degree of $f$ as a map from $\mathbb S^2=\mathbb C\cup \{\infty\}$ onto $\mathbb S^2$, and $w=f(z)=\tilde x+i\tilde y$. Thus we have
$$\sum_{j=0}^N\alpha_j+\alpha_{\infty}=2 deg(f). $$
Theorem \ref{global-energy} is established.
\end{proof}

\begin{thm}\label{glo-ener-2}
Suppose $u$ is a solution of
\begin{equation}\label{non-int-1}
\left\{\begin{array}{ll}
\Delta u+ e^u=4\pi \alpha_0 \delta_{p_0}+\sum_{i=1}^N4\pi \alpha_i\delta_{p_i}, \quad \mbox{ in }\quad \mathbb R^2,\\
\int_{\mathbb R^2}e^u<\infty.
\end{array}
\right.
\end{equation}
where $p_0,p_1,...,p_N$ are distinct points in $\mathbb R^2$ and $\alpha_1,....,\alpha_N$ are positive integers, $\alpha_0>-1$.
Then $\displaystyle{\frac 1{4\pi}\int_{\mathbb R^2}e^u}$ is equal to $2(\alpha_0+1)+2k$ for some $k\in \mathbb{Z}$ or $2k_1$ for some
 $k_1\in \mathbb N$.
\end{thm}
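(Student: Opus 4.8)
The plan is to extend the developing-map argument used for Theorem \ref{global-energy}, the essential new ingredient being a monodromy analysis at the single non-integer source $p_0$. As in \eqref{liou-1} I would write $u=\log\frac{8|f'|^2}{(1+|f|^2)^2}$, so that $e^u|dz|^2$ is a metric of constant positive curvature with conic singularities and $f$ is its (a priori multivalued) developing map into $\mathbb S^2=\mathbb C\cup\{\infty\}$, with monodromy in $PSU(2)$. Since $\alpha_1,\dots,\alpha_N$ are positive integers, the cone angles $2\pi(1+\alpha_i)$ are integer multiples of $2\pi$ and $f$ is single-valued around each $p_i$; the only nontrivial local monodromy is the rotation around $p_0$, which is conjugate to $w\mapsto e^{2\pi i\alpha_0}w$. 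Hence the global monodromy group is generated by a single elliptic element, and after post-composing $f$ with a M\"obius transformation I may assume this rotation fixes $0$ and $\infty$ and that $f(p_0)=0$, so that $f\sim c(z-p_0)^{1+\alpha_0}$ near $p_0$. Because the loop around $\infty$ is, up to the trivial loops around $p_1,\dots,p_N$, the inverse of the loop around $p_0$, the value $f(\infty)$ must be a fixed point of the rotation, i.e.\ $f(\infty)\in\{0,\infty\}$; this dichotomy produces exactly the two cases in the statement.

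Next I would exploit that $\omega:=d\log f=\frac{f'}{f}\,dz$ is invariant under $f\mapsto e^{2\pi i\alpha_0}f$ and hence descends to a single-valued meromorphic one-form on $\mathbb S^2$. Its divisor can be read off from the local behaviour of $f$: at a regular point where $f$ takes the value $0$ or $\infty$ the map is an immersion, so $f$ has a simple zero or pole and $\omega$ a simple pole; at $p_0$ and at $\infty$, $\omega$ again has simple poles, with residues $1+\alpha_0$ and $\pm(\alpha_\infty-1)$ respectively (using \eqref{u-inf}); at a prescribed point $p_i$ whose image is $0$ or $\infty$, $\omega$ has a simple pole with residue $\pm(1+\alpha_i)$; and at a prescribed point $p_i$ whose image is finite and nonzero, $f'$ vanishes to order $\alpha_i$ while $f\neq 0$, so $\omega$ has a zero of order $\alpha_i$. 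The crucial structural fact is that, away from $S\cup\{\infty\}$, the solution $u$ is smooth with $e^u>0$, so $f$ is unramified there and $\omega$ has no further zeros. Applying $\deg(\mathrm{div}\,\omega)=-2$ then yields an identity, call it $(\diamond)$, asserting that the total branching $\sum_{i:\,f(p_i)\neq 0,\infty}\alpha_i$ equals the number of points in $\mathbb C$, other than $p_0$, where $f$ takes the value $0$ or $\infty$ (each counted once).

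Finally I would combine the residue theorem $\sum\mathrm{Res}\,\omega=0$ with the mass formula $\frac1{4\pi}\int_{\mathbb R^2}e^u=\alpha_0+\sum_{i=1}^N\alpha_i+\alpha_\infty$, the analogue of \eqref{u-integral}. Solving the residue relation for $\alpha_\infty$ and substituting gives $\frac1{4\pi}\int e^u=2(\alpha_0+1)+m$ with $m\in\mathbb Z$ when $f(\infty)=\infty$, and $\frac1{4\pi}\int e^u=m'\in\mathbb Z$ when $f(\infty)=0$. In either case the relevant integer reduces modulo $2$ to $\sum_{i:\,f(p_i)\neq 0,\infty}\alpha_i$ plus the number of preimages of $\{0,\infty\}$ in $\mathbb C\setminus\{p_0\}$, which by $(\diamond)$ equals twice the branching sum and is therefore even. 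This forces $\frac1{4\pi}\int e^u=2(\alpha_0+1)+2k$ in the first case and $\frac1{4\pi}\int e^u=2k_1$ with $k_1\in\mathbb N$ (positivity being automatic from $\int e^u>0$) in the second, which is exactly the claim.

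I expect the main difficulty to lie in the first step: rigorously justifying the monodromy picture, namely that the local monodromy at a non-integer conic point is a genuine rotation with eigenvalue $e^{2\pi i\alpha_0}$, that the normal form $f\sim c(z-p_0)^{1+\alpha_0}$ holds together with the correct behaviour $f(\infty)\in\{0,\infty\}$ at infinity, and that no spurious ramification occurs at regular points so that the divisor of $\omega$ is exactly as tabulated. Once this is in place, the parity bookkeeping through $\deg(\mathrm{div}\,\omega)=-2$ and the residue theorem is routine.
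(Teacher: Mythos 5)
Your proposal is correct, but it takes a genuinely different route from the paper's proof. The paper also starts from the developing map, but it immediately passes to the Schwarzian derivative and the associated linear ODE $y''=I(z)y$: the parity information is extracted from the \emph{linear} monodromy in $GL(2,\mathbb{C})$, whose local eigenvalues are the half-angle exponentials $e^{\pm i\pi\alpha_j}$, so each integer $\alpha_i$ contributes $\rho_i=\pm I$, and the cyclic relation $\rho_\infty\rho_N\cdots\rho_0=I$ forces $\alpha_\infty=\pm\sum_{j=0}^N\alpha_j+2k$; the conclusion then follows from $\frac{1}{4\pi}\int_{\mathbb R^2} e^u=\sum_j\alpha_j+\alpha_\infty$. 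You instead stay with the projective $PSU(2)$ monodromy of $f$ itself, which by design only sees $\alpha_\infty$ modulo $1$ rather than modulo $2$, and you recover the missing parity from a global count: the residue theorem for $\omega=d\log f$ combined with $\deg(\mathrm{div}\,\omega)=-2$, i.e.\ a Riemann--Hurwitz style bookkeeping of preimages of the two fixed points $\{0,\infty\}$ against the ramification at the integer cone points. I checked this bookkeeping and it closes: in both cases the parity defect is congruent mod $2$ to $\sum_{i:\,f(p_i)\neq 0,\infty}\alpha_i$ plus the number of preimages of $\{0,\infty\}$ in $\mathbb{C}\setminus\{p_0\}$, which your identity $(\diamond)$ shows is even. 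What the paper's route buys is brevity and uniformity: the exponent computation treats integer and non-integer $\alpha_0$ in one stroke, at the price of invoking Fuchsian local theory for the second-order ODE. What your route buys is that it is a direct geometric continuation of the degree argument of Theorem \ref{global-energy} (to which it degenerates when the monodromy is trivial), it avoids the ODE entirely, and it makes the evenness visible as an honest count of points; the price is justifying the local normal forms $f\sim c(z-p_0)^{1+\alpha_0}$ and $f\sim c\zeta^{\pm(\alpha_\infty-1)}$ with no logarithmic terms (automatic here because the monodromy is unitarizable, the metric $e^u|dz|^2$ being globally defined) and the absence of ramification at regular points, which you correctly isolate as the technical core. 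Two small remarks: when $\alpha_0\in\mathbb{Z}$ the rotation is trivial and your fixed-point dichotomy $f(\infty)\in\{0,\infty\}$ collapses, so that case should be routed back through Theorem \ref{global-energy}; and your constant $8$ in $e^u=8|f'|^2/(1+|f|^2)^2$ is indeed the correct normalization for $\Delta u+e^u=0$, whereas \eqref{liou-1} carries a harmless slip that the paper's subsequent area computation silently compensates.
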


\begin{proof}
As in Theorem \ref{global-energy} there is a developing map $f(z)$ of $u$ such that
\begin{equation}\label{2-1s}
u(z)=\log \frac{4|f'(z)|^2}{(1+|f(z)|^2)^2},\quad z\in \mathbb C.
\end{equation}

On one hand by (\ref{non-int-1}), $u_{zz}-\frac 12 u_z^2$ is a meromorphic function in $\mathbb C\cup \{\infty\}$ because away from the Dirac masses
$$4(u_{zz}-\frac 12 u_z^2)_{\bar z}=-(e^u)_z+u_ze^u=0. $$
By $u(z)=2\alpha_i \log |z-p_i|+O(1)$ near $p_i$ we have
$$u_{zz}-\frac 12 u_z^2=-2\{\sum_{j=0}^N \frac{\alpha_j}2(\frac{\alpha_j}2+1)(z-p_j)^{-2}+A_j(z-p_j)^{-1}+B\}, $$
where $B\in \mathbb C$ is an unknown constant. On the other hand by (\ref{2-1s}), a straightforward computation shows that
\begin{equation}\label{2-2s}
u_{zz}-\frac 12u_z^2=\frac{f'''}{f'}-\frac 32(\frac{f''}{f'})^2.
\end{equation}
Using the Schwarz derivative of $f$:
$$\{f;z\}=\frac{f'''(z)}{f'(z)}-\frac 32(\frac{f''(z)}{f'(z)})^2$$
and letting
$$I(z)=\sum_{j=0}^N\frac{\alpha_j}2(\frac{\alpha_j}2+1)(z-p_j)^{-2}+A_j(z-p_j)^{-1}+B, $$
we write the equation for $f$ as
\begin{equation}\label{add-8-12-1}
\{f,z\}=-2I(z).
\end{equation}
A well known classic theorem (see \cite{w-w}) says that for any two linearly independent solutions $y_1$ and $y_2$ of
\begin{equation}\label{2-3s}
y''(z)=I(z)y(z),
\end{equation}
the ratio $y_2/y_1$ always satisfies
$$\{y_2/y_1;z\}=-2 I(z). $$
By (\ref{add-8-12-1}) and the basic result of the Schwarz derivative, $f(z)$ can be written as the ratio of two linearly independent solutions.
This is how equation (\ref{entire-u}) is related to the complex ODE (\ref{2-3s}). We refer the readers to \cite{chai} for the details.

For the complex ODE (\ref{2-3s}), there is an associated monodromy representation $\rho$ from $\pi_1(\mathbb C\setminus \{p_0,p_1,...,p_{N}\};q)$ to $GL(2;\mathbb C)$
where $q$ is a base point. Note that at any singular point $p_j$ the local exponents are $\frac{\alpha_j}2+1$ and $-\frac{\alpha_j}2$.
So we have
$$\rho_j=\rho(\gamma_j)=C_j\left(\begin{array}{cc}
e^{\pi i \alpha_j} & 0 \\
0 & e^{-\pi i \alpha_j}
\end{array}
\right )C_j^{-1}  ,
$$
where $C_j$ is an invertible matrix, $\gamma_j\in \pi_1(\mathbb C\setminus \{p_0,...,p_N\},q)$ encircles $p_j$ once only, $0\le j\le N$. Then we have
$$\rho_{\infty}\rho_N...\rho_0=I_{2\times 2}. $$
Note that $\rho_j=\pm I$ for $1\le j\le N$. Hence
$$\rho_{\infty}^{-1}=C_0\left(\begin{array}{cc}
e^{\pi\sum_{j=0}^N \alpha_j}  & 0  \\
0 & e^{-\pi\sum_{j=0}^N \alpha_j}
\end{array}
\right ) C_0^{-1} $$
for some constant invertible matrix $C_0$.

On the other hand, the local exponents at $\infty$ can be computed as follows. Recall (\ref{2-3s}) and let $\hat y(z)=y(1/z)$. Then we have
\begin{equation}\label{add-8-12-2}
\hat y''(z)+\frac{2}{z}\hat y'(z)=\hat I(z)\hat y(z),
\end{equation}
where $\hat I(z)=I(1/z)z^{-4}$. Since $I(z)$ is the Schwarz derivative of $f(z)$, by direct computation $\hat I(z)$ is the Schwarz derivative of $f(1/z)$. As before we let
$\hat u(z)=u(1/z)-4\log |z|$. Then $f(1/z)$ is the developing map of $\hat u(z)$. Since
$$\hat u(z)=2(\alpha_{\infty}-2)\log |z|+O(1)~\mbox{ near }~0,$$
(because $u(z)=-2\alpha_{\infty}\log |z|+O(1)$ at infinity), we have
$$\hat I(z)=\frac{\alpha_{\infty}}2(\frac{\alpha_{\infty}}2-1)z^{-2}+\mbox{ higher order terms of $z$}~\mbox{ near } 0. $$
By (\ref{add-8-12-2}) we could prove that the local exponents of (\ref{2-3s}) are $-\frac{\alpha_{\infty}}2$ and $\frac{\alpha_{\infty}}2-1$.
 Hence $e^{\frac{\alpha_{\infty}}2\pi i}$ equals either $e^{i\pi \sum_{j=0}^N \alpha_j}$ or
$e^{-i \pi \sum_{j=0}^N\alpha_j}$, which yields
\begin{equation}\label{k-nn}
\alpha_{\infty}=-\sum_{j=0}^N\alpha_j +2k \quad \mbox{ or } \quad \alpha_{\infty}=\sum_{j=0}^N\alpha_j +2k
\end{equation}
for some $k\in \mathbb Z$.
Since
$$\frac 1{4\pi}\int_{\mathbb R^2}e^u=\sum_{j=0}^N\alpha_j+\alpha_{\infty}, $$
we see that either $\frac 1{4\pi}\int_{\mathbb R^2}e^u=2k$ if the first case holds or
$\frac 1{4\pi}\int_{\mathbb R^2}e^u=2(\alpha_0+1)+2k'$ for $k'=2\sum_{i=1}^N\alpha_i+2k-2$ if the second case holds.
\end{proof}

\begin{rem} After Theorem \ref{global-energy} and Theorem \ref{glo-ener-2} haven been proved, we found a stronger version of both theorems in \cite{erem-3}. Because we only need the present form of both theorems, we include our proofs here to make the paper more self-contained.
\end{rem}

\section{Review of Bubbling Analysis From a Selection Process}

Let $u^k=(u_1^k,u_2^k)$ be solutions of (\ref{toda-b2g2}) such that (\ref{asm-u}) holds. In this section we review the process to select a set $\Sigma_k=\{0,x_1^k,...,x_n^k\}$ and balls $B(x_i^k,l_k)$ such that $u^k$ has nonzero local masses in $B(x_{i}^{k},l_{k})$. This selection process was first carried out in \cite{lwz-apde}. We briefly review it below.

The set $\Sigma_k$ is constructed by induction. If (\ref{toda-b2g2}) has no singularity, we start with $\Sigma_k=\emptyset$. If (\ref{toda-b2g2}) has a singularity, we start with $\Sigma_k=\{0\}$. By
induction suppose $\Sigma_k$ consists of $\{0,x_1^k,...,x_{m-1}^k\}$. Then we consider
\begin{equation}\label{se-eq}
\max_{x\in B_1} \bigg ( u_i^k(x)+2\log\mbox{dist}(x,\Sigma_k) \bigg ).
\end{equation}
If the maximum is bounded from above independent of $k$, the process stops and $\Sigma_k$ is exactly equal to $\{0,x_1^k,...,x_{m-1}^k\}$. However if the maximum tends to infinity, let $q_k$ be where (\ref{se-eq}) is achieved and we set
$$d_k=\frac 12\mbox{dist}(q_k,\Sigma_k) $$
and
$$S_i^k(x)=u_i^k(x)+2\log (d_k-|x-q_k|)\quad \mbox{ in }\quad B(q_k,d_k), \quad i=1,2. $$
%It is clear that for fixed $k$, $S_i^k(x)\to -\infty$ if $x\to \partial B(q_k,d_k)$. On the other hand for at least one component $j$,
%$$S_j^k(q_k)=u_j^k(q_k)+2\log d_k\to \infty. $$
Suppose $i_0$ is the component that attains
\begin{equation}\label{se-eq-2}
\max_i\max_{x\in \bar B(q_k,d_k)}S_i^k
\end{equation}
at $p_k$. Then we set
$$\tilde l_k=\frac 12(d_k-|p_k-q_k|)$$
and scale $u_i^k$ by
\begin{equation}
v_i^k(y)=u_i^k(p_k+e^{-\frac 12u_{i_0}^k(p_k)}y)-u_{i_0}^k(p_k), \quad \mbox{ for }\quad |y|\le R_k \doteqdot e^{\frac 12 u_{i_0}^k(p_k)} \tilde l_k.
\end{equation}
It can be shown that $R_k\to \infty$ and $v_i^k$ is bounded from above over any fixed compact subset of
$\mathbb R^2$. Thus by passing to a subsequence $v_i^k$ satisfies one of the following two alternatives:

(a)\ $(v_1^k,v_{2}^{k})$ converges in $C^2_{loc}(\mathbb R^2)$ to
$(v_1,v_{2})$ which satisfies
\begin{equation}
\Delta v_i+\sum_{j\in I} k_{ij} e^{v_j}=0~\mathrm{in}~~\mathbb R^2, \quad i\in I=\{1,2\}.
\end{equation}

(b)\ Either $v_1^k$ converges to
\begin{equation}
\Delta v_1+2e^{v_1}=0~\mathrm{in}~\mathbb R^2
\end{equation}
and $v_2^k\to -\infty$ over any fixed compact subset of $\mathbb R^2$ or $v_2^k$ converges to $\Delta v_2+2e^{v_2}=0$ in $\mathbb R^2$ and $v_1^k\to -\infty$ over any fixed compact subset of $\mathbb R^2$.

\medskip

Therefore in either case, we could choose $l_k^*\to \infty$  such that
\begin{equation}\label{add-e-8-1}
v_i^k(y)+2\log |y|\le C, \quad \mbox{ for } i=1,2\ \text{and}\ |y| \leqslant l_k^*
\end{equation}
and
$$\int_{B(0,l_k^*)}h_i^ke^{v_i^k}dy=\int_{\mathbb R^2}e^{v_i(y)}+o(1). $$
By scaling back to $u_i^k$, we add $p_k$ in $\Sigma_k$ with $l_k=e^{-\frac 12 u_{i_0}^k(p_k)}l_k^*$. We can continue in this way until the Harnack-type inequality (1.9) holds.

We summarize what the selection process has done in the following proposition ( a detailed proof for a more general case can be found in Proposition 2.1 of \cite{lwz-apde}):
\medskip

\noindent{\bf Proposition 3A.}
%\label{pr.selection}
{\em Let $u^k$ be described as above.  Then there exist a finite set
$\Sigma_k:=\{0,x_1^k,....,x_m^k\}$ (if $0$ is not a singular point, then $0$ can be deleted from $\Sigma_k$) and
positive numbers $l_1^k,...,l_m^k\to 0$ as $k \rightarrow \infty$ such that the followings hold:
\begin{enumerate}
\item There exists $C_1>0$ independent of $k$ such that (\ref{1.harnack}) holds.
\item In $B(x_j^k,l_j^k)$ ($j=1,..,m$), let $R_{j,k}=e^{\frac 12u_{i_0}^k(x_j^k)}l_{j}^{k}$, $u_{i_0}^k(x_j^k)=\max_iu_i^k(x_j^k)$ and
\begin{equation}
\label{scal-v}
v_i^k(y)=u_i^k(x_j^k+e^{-\frac12u_{i_0}^k(x_j^k)}y)-u_{i_0}^k(x_j^k)
\end{equation}
for $|y|\le R_{j,k}$, then
$v^k=(v_1^k,v_2^k)$ satisfies either (a) or (b).
\item $B(x_{j}^{k},l_{j}^{k}) \cap\ B(x_{i}^{k},l_{i}^{k}) =\emptyset.$
\end{enumerate}}
\medskip

The inequality (\ref{1.harnack}) is a Harnack type inequality, because it implies the following result
\medskip

\noindent{\bf Proposition 3B.}
%\label{pr.harnack}
{\em Suppose $u^k$ satisfies (\ref{toda-b2g2}) in $B(x_0,2r_{k})$ such that
$$u_i^k(x)+2\log |x-x_0|\le C,\quad \mbox{ for }~x\in B(x_0,r_{k}). $$
Then
\begin{equation}
\label{har-2}
|u_i^k(x_1)-u_i^k(x_2)|\le C_0,\quad \mbox{ for } \frac 12\le \frac{|x_1-x_0|}{|x_2-x_0|}\le 2 \mbox{ and } x_1,x_2\in B(x_0,r_{k}).
\end{equation}}

The proof of Proposition 3B is standard (see \cite[Lemma 2.4]{lwz-apde}), so we omit it here. Let $x_l^k\in \Sigma_k$ and $\tau_l^k=\frac 12
\mbox{dist}(x_l^k,\Sigma_k\setminus \{x_l^k\})$, then (\ref{har-2}) implies
\begin{equation}\label{like-a}
u_i^k(x)=\bar
u_{x_l^k,i}^k(r)+O(1), \quad x \in B(x_{l}^{k},\tau_{l}^{k}),
\end{equation}
where $r=|x_l^k-x|$ and $\bar u_{x_l^k,i}^k$ is the average of $u_i^k$ on $\partial B(x_l^k,r)$:
\begin{equation}\label{uxi-a}
\bar u_{x_l^{k},i}^k(r)=\frac 1{2\pi r}\int_{\partial B(x_l^{k}, r)}u_i^k dS,
\end{equation}
and $O(1)$ is independent of $r$ and $k$.

Next we introduce the notions of slow decay or fast decay in our bubbling analysis.
\begin{Def}\label{f-s-decay}
We say $u_i^k$ has fast decay at $x\in B(x_0,r_k)$
if along a subsequence,
$$u_i^k(x)+2 \log |x-x_0|\le -N_k,\quad \mbox{ for }~x\in \partial B(x_0,r_k) $$
for some $N_k\to \infty$ and $u_i^k$ is called to have  slow-decay if there is a constant $C$ independent
of $k$ and
$$u_i^k(x)+2 \log |x-x_0|\ge -C,\quad \mbox{ for }~x\in \partial B(x_0,r_k). $$
\end{Def}

Fast decay is very important for
evaluating Pohozaev identities. The following proposition is a direct consequence of \cite[Proposition 3.1]{lwz-apde} and it says if both components are fast-decay on the boundary, Pohozaev identity holds for the local masses.

In the following proposition, we let $B=B(x^k,r_k)$. If $x^k\neq0,$ then we assume $0\notin B(x^k,2r_k).$
\medskip

\noindent{\bf Proposition 3C.}
%\label{pr.sigma}
{\em Suppose both $u_1^k,u_2^k$ have fast decay on $\partial B$, where $B$ is given above. Then $(\sigma_{1},\sigma_{2})$ satisfies the P.I.(1.8), where $$\sigma_i=\lim_{k\to 0}\frac{1}{2\pi}\int_{B}h_i^ke^{u_i^k},~i=1,2.$$}

\indent The proof of Proposition 3C requires some delicate analysis. We refer the readers to \cite[Proposition 3.1]{lwz-apde} for the proofs. The P.I. plays an important role in our analysis later.
\medskip

\section{Two Lemmas}
In this section, we will prove two crucial lemmas which play the key role in section 5 and 6. For Lemma \ref{le4.1}, we assume

\noindent (i). The Harnack inequality
$$u_i^k(x)+2\log |x|\le C,~\mbox{ for }~\frac 12 l_k\le |x|\le 2s_k,~\mathrm{and}~i=1,2. $$
\noindent (ii). Both $u_i^k$ have fast-decay on $\partial B(0,l_k)$ and $\sigma_i^k(B(0,l_k))=\sigma_i+o(1)$ for $i=1,2$, where $\sigma_i=\lim\limits_{r\rightarrow 0}\lim\limits_{k \rightarrow \infty}\sigma_{i}(B(0,rs_{k})),\ i=1,2$.

\noindent (iii). One of $u_i^k$ has slow-decay on $\partial B(0,s_k)$.

\begin{lem}
\label{le4.1}
\noindent(a). Assume (i) and (ii), If $u_{i}^{k}$ has slow-decay on $\partial B(0,s_k)$, then $$2\mu_{i}-\sum\limits_{j=1}^{2}k_{ij}\sigma_{j}\geqslant0.$$
\noindent(b). Assume (i), (ii) and (iii), then the other component has fast decay on $\partial B(0,s_k)$.
\end{lem}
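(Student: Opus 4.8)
Lemma 4.1 has two parts. Part (a) asserts a sign condition on the local masses when a component has slow decay on the outer boundary; part (b) asserts that slow decay of one component forces fast decay of the other. Let me think about the right strategy.

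The key tool I have is Proposition 3C: if both components have fast decay on a boundary, the Pohozaev identity holds. But here I'm in a situation where at least one component has slow decay, so I can't directly apply the full P.I. The natural object to track is the average $\bar u_{i}^k(r)$ of $u_i^k$ over circles of radius $r$, and its relation to the accumulated mass.

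Let me think about the structure. On the annulus between $l_k$ and $s_k$, the Harnack inequality (i) holds. Inside $B(0,l_k)$ the masses are $\sigma_i$ and both components have fast decay there. The question is what happens on $\partial B(0,s_k)$.

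**Part (a) — the integral/averaging argument.**

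The plan for (a) is to integrate the equation over circles and use the slow-decay hypothesis. Write $\sigma_i^k(r) = \frac{1}{2\pi}\int_{B(0,r)} h_i^k e^{u_i^k}$ for the accumulated mass up to radius $r$. Integrating the $i$-th equation in (1.4) over $B(0,r)$ and using the divergence theorem, I get

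$$r \frac{d}{dr}\bar u_i^k(r) = 2\alpha_i - \sum_{j=1}^2 k_{ij}\sigma_j^k(r),$$

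(up to the $4\pi\alpha_i$ normalization — if $0$ is singular this contributes $2\alpha_i$, otherwise $\alpha_i=0$). So $\frac{d}{dr}\bar u_i^k(r) = \frac{1}{r}\big(2\alpha_i - \sum_j k_{ij}\sigma_j^k(r)\big)$.

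The point is this: the Harnack inequality says $u_i^k(x) \le -2\log|x| + C$ on the annulus, equivalently $\bar u_i^k(r) + 2\log r \le C$. Slow decay of $u_i^k$ on $\partial B(0,s_k)$ means $\bar u_i^k(s_k) + 2\log s_k \ge -C$. So on the annulus, $\bar u_i^k(r) + 2\log r$ is essentially a function that stays bounded (both above by Harnack and, at $r=s_k$, below by slow decay). Since $\frac{d}{dr}(\bar u_i^k(r) + 2\log r) = \frac{1}{r}(2 + 2\alpha_i - \sum_j k_{ij}\sigma_j^k(r)) = \frac{1}{r}(2\mu_i - \sum_j k_{ij}\sigma_j^k(r))$, and $\sigma_j^k(r)$ is monotone nondecreasing in $r$ and close to $\sigma_j$ just past $l_k$, the idea is:

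If $2\mu_i - \sum_j k_{ij}\sigma_j < 0$, then for $r$ slightly larger than $l_k$ the derivative of $\bar u_i^k(r)+2\log r$ is strictly negative (of order $-c/r$), forcing $\bar u_i^k(r) + 2\log r$ to decrease by at least $c\log(s_k/l_k) \to +\infty$ as one moves from $l_k$ to $s_k$; but past $l_k$ the quantity was bounded above, and at $s_k$ slow decay forces it bounded below — contradiction, because it would have to drop by an unbounded amount. Hence $2\mu_i - \sum_j k_{ij}\sigma_j \ge 0$.

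The subtle point I need to handle carefully: $\sigma_j^k(r)$ could increase above $\sigma_j$ as $r$ grows toward $s_k$ (more mass accumulates in the annulus), which only makes $2\mu_i - \sum_j k_{ij}\sigma_j^k(r)$ smaller (since $k_{ij} \le 0$ for $i \ne j$ but $k_{ii}=2>0$ — so the sign of how extra mass affects things depends on which component). This is the delicate bookkeeping: I need to argue that the mass in the annulus doesn't conspire to flip the sign, and the right statement is about $\sigma_j$ (the limiting inner mass) rather than $\sigma_j^k(r)$. I suspect the clean argument uses that once the derivative is negative, the average drops, which by Harnack is already near its ceiling, and the integral $\int_{l_k}^{s_k} \frac{dr}{r} = \log(s_k/l_k)$ — but I need to confirm this ratio diverges, which should follow from the selection process giving separated scales.

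**Part (b) — deriving fast decay of the other component.**

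For (b), suppose $u_1^k$ has slow decay on $\partial B(0,s_k)$ (WLOG). I want to show $u_2^k$ has fast decay there. The strategy is to apply part (a) to component $1$: slow decay of $u_1^k$ gives $2\mu_1 - k_{11}\sigma_1 - k_{12}\sigma_2 \ge 0$, i.e. $2\mu_1 - 2\sigma_1 - k_{12}\sigma_2 \ge 0$.

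Now suppose for contradiction that $u_2^k$ also has slow decay on $\partial B(0,s_k)$. Then by part (a) applied to component $2$, $2\mu_2 - k_{21}\sigma_1 - 2\sigma_2 \ge 0$ as well. But if both components have slow decay on $\partial B(0,s_k)$, I claim I can run the Pohozaev-type analysis to get that the masses satisfy an equality that contradicts the selection process — specifically, if both are slow-decay, then more mass should be accumulating beyond $l_k$ up to $s_k$ (the selection process would not have stopped, or the quantity (4.1) controlling selection would force new bubbling between $l_k$ and $s_k$). The cleaner route: slow decay of both on $\partial B(0,s_k)$, combined with Harnack, should let me show that both $\bar u_i^k(r)+2\log r$ stay bounded on the whole annulus, which via the ODE above forces $2\mu_i - \sum_j k_{ij}\sigma_j^k(r) \approx 0$ for both $i$ throughout — but then integrating shows $\sigma_j^k(s_k) = \sigma_j + o(1)$, meaning no extra mass in the annulus, which contradicts hypothesis (iii) being a genuine slow-decay (nontrivial outer) situation, or contradicts the definition of $s_k$ as a scale where the selection process detected something.

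**Main obstacle.**

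The hard part will be part (b): making precise why two simultaneous slow-decays are impossible. The sign inequalities from (a) alone are consistent with many mass pairs, so (a) does not immediately rule out the bad case. I expect the real content is a quantitative use of the Harnack inequality together with the monotonicity of $\sigma_j^k(r)$ and the precise meaning of slow versus fast decay — showing that a slow-decaying component "pumps" its partner down via the off-diagonal coupling $k_{ij} < 0$. Concretely, slow decay of $u_1^k$ means $e^{u_1^k}$ is not too small on the annulus, and through the $k_{21} e^{u_1^k}$ term in the equation for $u_2^k$ this drives $\bar u_2^k(r)+2\log r$ strictly downward, which is exactly fast decay for $u_2^k$. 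Turning this heuristic into the strict inequality requires the boundedness ceiling from (i) and likely a bootstrap on the sign of $\frac{d}{dr}(\bar u_2^k + 2\log r)$. I would organize the proof so that (a) is the clean ODE-comparison lemma and (b) follows by applying (a) to both components and then extracting a contradiction from the coupled monotone system, with the off-diagonal negativity of the Cartan matrix doing the decisive work.
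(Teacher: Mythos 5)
Your part (a) takes a genuinely different route from the paper's. The paper rescales at the outer radius, $v_j^k(y)=u_j^k(s_ky)+2\log s_k$, passes to a limit equation in $B_2$ carrying a Dirac mass $4\pi\beta_i\delta_0$ with $\beta_i=\alpha_i-\frac12\sum_jk_{ij}\sigma_j$, and invokes the nonexistence of such singular Liouville solutions when $\beta_i<-1$; you instead run an ODE comparison on $\bar u_i^k(r)+2\log r$. Your route can be made rigorous, but the ``delicate bookkeeping'' you flag is resolved precisely by hypothesis (ii): since $\sigma_i$ is defined as $\lim_{r\to0}\lim_k\sigma_i(B(0,rs_k))$ and (ii) says this equals the mass at $l_k$, monotonicity of $\sigma_j^k(\rho)$ gives $\sigma_j^k(\rho)=\sigma_j+O(\epsilon)$ on all of $[l_k,r_\epsilon s_k]$ for $k$ large, so no sign flip can occur below scale $r_\epsilon s_k$; on $[r_\epsilon s_k,s_k]$ the change of $\bar u_i^k+2\log r$ is bounded by a constant depending only on $\epsilon$ (total mass is bounded), and $s_k/l_k\to\infty$ may be assumed since otherwise (ii) and (iii) already contradict the bounded oscillation of Proposition 3B. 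So (a) is workable, just not closed as written.

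Part (b), however, has a genuine gap, and it is exactly the step you flag as ``the hard part.'' Your opening claim --- ``here I'm in a situation where at least one component has slow decay, so I can't directly apply the full P.I.'' --- overlooks that hypothesis (ii) says \emph{both} components have fast decay on $\partial B(0,l_k)$, so Proposition 3C applies at the \emph{inner} radius and the pair $(\sigma_1,\sigma_2)$ of the lemma already satisfies the Pohozaev identity (1.8). This is the decisive ingredient, and it is the whole of the paper's proof of (b): rewrite (1.8) as
$$k_{21}\sigma_1\Bigl(4\mu_1-\sum_{l}k_{1l}\sigma_l\Bigr)+k_{12}\sigma_2\Bigl(4\mu_2-\sum_{l}k_{2l}\sigma_l\Bigr)=0;$$
by (a), the slow-decay component $i$ satisfies $4\mu_i-\sum_lk_{il}\sigma_l\geq 2\mu_i>0$, so (using $k_{12},k_{21}<0$ and $\sigma_i\geq0$) its term is nonpositive, which forces $4\mu_j-\sum_lk_{jl}\sigma_l<0$ for $j\neq i$; hence $2\mu_j-\sum_lk_{jl}\sigma_l<0$, and the contrapositive of (a) shows $u_j^k$ cannot have slow decay on $\partial B(0,s_k)$, i.e.\ it has fast decay. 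Neither of your fallback heuristics substitutes for this. First, both components cannot ``stay bounded on the whole annulus'': by (ii) their averages are below $-N_k$ at $r=l_k$. Second, ``no extra mass in the annulus'' does not contradict slow decay at $s_k$: a component with fast decay at $l_k$ and $2\mu_i-\sum_jk_{ij}\sigma_j>0$ can climb back up to slow decay while contributing only $o(1)$ mass (this is precisely what happens in the paper's Proposition 5.1), so no contradiction can be extracted there. As you correctly observe, the inequalities from (a) alone do not close the argument; without the Pohozaev identity at $l_k$ the proof of (b) is missing its key ingredient.
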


\begin{proof}
(a) Suppose $u_i^k$ have slow decay on $\partial B(0,s_k)$, then
the following scaling
$$v_j^k(y)=u_j^k(s_ky)+2\log s_k, \quad j=1,2, \quad \mbox{ for } y\in B_2 $$
gives
$$\Delta v_j^k(y)+\sum_{l=1}^2 k_{jl} h_l^k(s_ky) e^{v_l^k(y)}=4\pi \alpha_i^k \delta_0, \quad \mbox{ in } y\in B_2. $$

If the other component also has slow-decay on $\partial B(0,s_k)$, then $(v_{1}^k,v_{2}^k)$ coverages to $(v_1,v_2)$ which satisfies
\begin{equation}
\label{fourvj}
\Delta v_{j}(y) + \sum\limits_{j=1}^{2}k_{jl}e^{v_j}=0,\ \text{in}\ B_2 \backslash \{0 \},~j=1,2.
\end{equation}
If the other component has fast-decay on $\partial B(0,s_k)$, then $v_{i}^{k}(y)$ coverages to $v_{i}(y)$ and $v_{j}(y) \rightarrow -\infty$, $j\neq i$. Furthermore, $v_{i}(y)$ satisfies
\begin{equation}
\label{fourvi}
\Delta v_{i}(y)  + 2e^{v_{i}} = 0\ \text{in}\ B_2 \backslash \{0 \}.
\end{equation}
For any $r>0$,
\begin{equation*}
\begin{aligned}
\int_{\partial B(0,r)}\frac{\partial v_{i}(y)}{\partial v} dS &= \lim\limits_{k\rightarrow\infty}(4\pi\alpha^{k}_{i}-\sum\limits_{j=1}^{2}\int_{B(0,r)}k_{ij}h^{k}_{j}e^{v^{k}_j}dy) \\
&= 4\pi\alpha_{i} - 2\pi \sum\limits_{j=1}^{2}k_{ij}\sigma_{j} + o(1) \doteqdot 4\pi\beta_{i}+o(1),
\end{aligned}
\end{equation*}
which implies RHS of both (\ref{fourvj}) and (\ref{fourvi}) should be replaced by $4\pi\beta_{i}\delta(0)$ as an equation defined in $B_2$. It is known that if $\beta_i < -1$, either (\ref{fourvj}) or (\ref{fourvi}) has no solutions. Hence $\alpha_i - \frac{1}{2}\sum k_{ij}\sigma_{j}\geqslant -1$ and then (a) is proved.
\vspace{0.2cm}

\noindent (b) Since both components have fast decay on $\partial B(0,l_k)$, the pair $(\sigma_1,\sigma_2)$ satisfies the P.I. (\ref{pi-3}). By a simple manipulation, the P.I. (\ref{pi-3}) can be written as
\begin{equation}
\label{fourpi}
k_{21}\sigma_{1}(4\mu_{1}-k_{12}\sigma_2-k_{11}\sigma_1) + k_{12}\sigma_2(4\mu_2-k_{21}\sigma_{1}-k_{22}\sigma_2)=0
\end{equation}
Note by (a)
\begin{equation*}
4\mu_{i} - \sum\limits_{l=1}^{2}k_{il}\sigma_{l} > 2\mu_{i} - \sum\limits_{l=1}^{2}k_{il}\sigma_{l} \geqslant 0.
\end{equation*}
Hence for $j \neq i$
\begin{equation*}
2\mu_{j} - \sum\limits_{j=1}^{2}k_{il}\sigma_{l} < 4\mu_{j}-\sum\limits_{j=1}^{2}k_{il}\sigma_{l} < 0 ,
\end{equation*}
where the last inequality is due to (\ref{fourpi}). By (a) again, $u_{j}^k$ can not have slow-decay on $\partial B(0,s_k)$.
\end{proof}

Our second lemma is about the fast-decay.
\begin{lem}
\label{le4.2}
Suppose the Harnack-type inequality holds for both components over $r \in [\frac{l_k}{2},2s_k]$. If $u_i^k$ is fast-decaying on $r \in [l_k,s_k]$, then
\begin{equation*}
\sigma^{k}_{i}(B(0,s_k)) = \sigma^{k}_{i}(B(0,l_k))  + o(1).
\end{equation*}
\end{lem}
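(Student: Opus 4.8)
The plan is to reduce the whole statement to the behaviour of a single radially averaged quantity and then show that the annulus $B(0,s_k)\setminus B(0,l_k)$ carries a negligible amount of the $i$-th mass. Since the Harnack-type inequality is assumed on $\frac{l_k}{2}\le|x|\le 2s_k$, Proposition 3B gives $u_i^k(x)=\bar u_i^k(|x|)+O(1)$ with the $O(1)$ uniform in $k$, where $\bar u_i^k(r)$ is the average of $u_i^k$ over $\partial B(0,r)$. Hence it suffices to track $\phi_i(t):=\bar u_i^k(e^t)+2t$, and the hypothesis that $u_i^k$ is fast-decaying on $[l_k,s_k]$ says precisely that $\phi_i(t)\le -N_k$ on $[\log l_k,\log s_k]$ for some $N_k\to\infty$. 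Integrating (\ref{toda-b2g2}) over $B(0,e^t)$ and applying the divergence theorem, and writing $m_j(r)=\sigma_j^k(B(0,r))$, I would record the exact first-order relation
\begin{equation*}
\dot\phi_i(t)=2\mu_i-\sum_{j=1}^2 k_{ij}m_j(e^t),
\end{equation*}
together with $\frac{d}{dt}m_i(e^t)=\frac{e^{2t}}{2\pi}\int_0^{2\pi}h_i^k e^{u_i^k}\,d\theta$. The Harnack bound turns the latter into $\frac{d}{dt}m_i(e^t)\asymp e^{\phi_i(t)}$, so the quantity to be estimated is
\begin{equation*}
\sigma_i^k(B(0,s_k))-\sigma_i^k(B(0,l_k))=\int_{\log l_k}^{\log s_k}\frac{d}{dt}m_i(e^t)\,dt\le C\int_{\log l_k}^{\log s_k}e^{\phi_i(t)}\,dt.
\end{equation*}

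The heart of the matter is to show this last integral is $o(1)$. The crude pointwise bound $\phi_i\le -N_k$ alone only yields $Ce^{-N_k}\log(s_k/l_k)$, which is not automatically small, since in the applications the annulus is long on the logarithmic scale ($l_k/s_k\to 0$). The clean way to close the gap is to upgrade uniform smallness to geometric decay: I would show that throughout the fast-decay range the slope $\dot\phi_i(t)=2\mu_i-\sum_j k_{ij}m_j(e^t)$ is strictly negative and bounded away from $0$, say $\dot\phi_i\le-\delta<0$. Then $\phi_i$ decreases at a definite rate, $e^{\phi_i(t)}\le e^{-N_k}e^{-\delta(t-\log l_k)}$, and the integral is controlled by a convergent geometric tail, $\int e^{\phi_i}\le C\delta^{-1}e^{-N_k}=o(1)$, independently of the length of the annulus. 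This would give the asserted equality.

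The main obstacle is exactly this slope estimate: ruling out a long stretch on which $\phi_i$ stays nearly flat at level $-N_k$, where the naive bound degenerates to merely $O(1)$. This is where the structure of the problem, rather than the pointwise decay by itself, must enter. Lemma \ref{le4.1}(a) already records the correct one-sided information (slow decay forces $2\mu_i-\sum_j k_{ij}\sigma_j\ge 0$); I would dualize it to show that genuine fast decay of $u_i^k$ forces the enclosed mass to be strictly supercritical, keeping $\dot\phi_i$ bounded away from $0$ on all of $[l_k,s_k]$. Failing a uniform $\delta$, the fallback is to combine the monotonicity of $r\mapsto m_i(r)$ with the uniform bound $\sigma_i^k(B(0,1))\le C$ from (\ref{asm-u}) to preclude such a flat stretch directly. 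I would isolate this slope lower bound as a separate preliminary claim and prove it first, since the remainder of the argument is routine once it is available.
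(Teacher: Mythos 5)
Your reduction to radial averages, the derivative formula $\dot\phi_i(t)=2\mu_i-\sum_j k_{ij}m_j(e^t)$, and the geometric-decay computation conditional on a slope bound all reproduce the first half of the paper's argument (there the slope bound is \eqref{fourd} and the tail estimate is exactly your $C\delta^{-1}e^{-N_k}$). But the step you correctly identify as the heart of the matter --- that fast decay of $u_i^k$ forces $\dot\phi_i\le-\delta<0$ on all of $[\log l_k,\log s_k]$ --- is not just unproven, it is false, and ``dualizing'' Lemma \ref{le4.1}(a) cannot rescue it: that lemma says slow decay implies $2\mu_i-\sum_j k_{ij}\sigma_j\ge0$, and what you need is its \emph{converse}, not its contrapositive. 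Fast decay constrains the level of $\phi_i$, not its slope. Concretely, for $A_2$ with $\mu_1=\mu_2=1$, start at $l_k$ with $(\sigma_1,\sigma_2)=(2,0)$, so $\dot\phi_1=-2$ but $\dot\phi_2=+4$; the second component then grows, turns to slow decay, and deposits a bubble inside the annulus, after which the Pohozaev identity \eqref{pi-3} forces $(\sigma_1,\sigma_2)=(2,4)$ and now $\dot\phi_1=2-2\cdot2+4=+2>0$, while $u_1^k$ remains fast-decaying up to $s_k$ (its level merely climbs from $-N_k$ toward, say, $-N_k/2$). This is precisely the transition $(2\mu_1,0)\to(2\mu_1,2\mu_1+2\mu_2)$ in $\Gamma(1,1)$, so it occurs in genuine blow-up scenarios. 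The conclusion $\sigma_1^k(s_k)=\sigma_1^k(l_k)+o(1)$ still holds there, but for a different reason: once the slope is positive and bounded below, $\int e^{\phi_i}$ is controlled by the \emph{outer} endpoint, where fast decay at $s_k$ applies.

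Your fallback also cannot close the gap: monotonicity of $m_i$ plus the uniform bound $\sigma_i^k(B(0,1))\le C$ only forbids unbounded mass gain in the annulus; a near-flat stretch of $\phi_i$ at level $-N_k$ of logarithmic length $\sim e^{N_k}$ contributes mass of order $1$, consistent with boundedness but fatal to the $o(1)$ conclusion. What actually excludes such drifting in the paper is a quantization/rigidity argument your proposal never invokes: whenever both components have fast decay, the enclosed masses satisfy the Pohozaev identity (Proposition 3C), whose solution set with one coordinate fixed is a quadratic, hence discrete --- masses cannot move by a small $\varepsilon_0$, only jump by definite amounts. The paper's proof takes the component $l$ (not necessarily your $i$!) that the P.I. at $l_k$ endows with the strong slope, lets $r_k$ be the maximal radius on which $\dot\phi_l\le-\mu_l$ holds, runs your geometric-decay estimate on $[l_k,r_k]$, and then splits: if both components fast decay there, discreteness of the P.I. solutions forces the masses at $r_k$ to coincide with those at $l_k$, so the strong slope persists at $r_k$ and maximality gives $r_k=s_k$; otherwise the other component $u_j^k$, $j\ne i$, turns to slow decay, bubbles, and adds quantized mass $\ge\varepsilon_0$ to $\sigma_j$ while $\sigma_i$ is unchanged, after which both components fast decay again at some $\tilde r_k\ll s_k$ and the whole argument restarts there, terminating in finitely many steps because the total mass is bounded. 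This case analysis and iteration over bubbles of the \emph{other} component is the missing ingredient; without it the slope hypothesis you need simply is not available.
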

\begin{proof}
Obviously the conclusion holds easily if $s_k/l_k \leqslant C$. So we assume $s_k/l_k \rightarrow +\infty$. The Harnack-type inequality implies $u_{i}^{k}(x) = \overline{u}_{i}^{k}(r) + o(1)$ for $\frac{1}{2}l_k \leqslant |x| \leqslant 2 s_k$. Thus we have from (\ref{toda-b2g2}) that
\begin{equation*}
\frac{d}{dr}(\overline{u}_{i}^{k}(r)+2\log\ r) =\frac{2\mu_i -\sum_{j=1}^2 k_{ij}\sigma_{j}^{k}(r)}{r}, \quad l_k \leqslant r \leqslant s_k,~i=1,2,
\end{equation*}
where $\sigma_j^{k}(r)=\sigma_j^k(B(0,r))$ and $\sigma_j=\lim_{k\to+\infty}\sigma_j^k(l_k),~j=1,2$.

By our assumption, the P.I. holds at $l_k$, which implies at least one component satisfies
\begin{equation*}
4\mu_l - \sum k_{lj}\sigma_{j}^{k}(l_k) \geqslant 2\mu_l+o(1)> 0.
\end{equation*}
Thus,
\begin{equation}
\label{fourd}
\frac{d}{dr}(\overline{u}_{l}^{(k)}(r)+2\log\ r) \leqslant -\frac{2\mu_l +o(1)}{r}\ \text{at}\ r= l_k.
\end{equation}
Suppose $r_k\in[l_k,s_k]$ is the largest $r$ such that
\begin{equation}
\label{fourd1}
\frac{d}{dr}(\overline{u}_{l}^{(k)}(r)+2\log\ r) \leqslant -\frac{\mu_l}{r}\ \text{for}\ r \in [l_k,r_k],
\end{equation}
thus, either the identity holds at $r=r_k$ or $r_k=s_k.$ For simplicity, we let $\varepsilon =\mu_l$. By integrating (4.4) from $l_k$ up to $r \leqslant r_k$, we have
\begin{equation*}
\overline{u}_{l}^{(k)}(r) + 2\log\ r \leqslant \overline{u}_{l}^{(k)}(l_k) + 2 \log (l_k) + \epsilon\log(\frac{l_k}{r}),
\end{equation*}
that is for $|x|=r$,
\begin{equation*}
e^{u_{l}^{k}(x)} \leqslant O(1)e^{\overline{u}_{l}^{k}(r)} \leqslant e^{-N_k}l^{\epsilon}_{k}r^{-(2+\epsilon)},
\end{equation*}
where we used $\overline{u}_{l}^{(k)}(l_k)+2\log l_k \leqslant -N_k$ by the assumption of fast-decay. Thus
\begin{equation*}
\int_{l_k \leqslant |x| \leqslant r_k} e^{u^{k}_{l}(x)}dx \leqslant 2\pi e^{-N_k}l^{\varepsilon}_k \int^{r_k}_{l_k}r^{-(1+\varepsilon)}dr = 2\pi\frac{e^{-N_k}}{\varepsilon} \rightarrow 0
\end{equation*}
as $k \rightarrow + \infty$. Hence
\begin{equation}
\label{sigma.lk}
\sigma_{l}^{k}(r_k) = \sigma^{k}_{l}(l_k)+o(1).
\end{equation}
%Using the above in (\ref{fourd}), we have
%\begin{equation*}
%\frac{d}{dr}(\overline{u}^{k}_{l}(r)+2\log\ r) \leqslant -\frac{2\mu_l +o(1)}{r}.
%\end{equation*}
%Clearly it yields a contradiction if $k$ is large.
%
%Since (\ref{fourd1}) holds for $r \in [l_k,s_k]$, we have $\overline{u}^{k}_{l}(s_k) = \overline{u}_{l}(l_k)+o(1)$. If $l=i$, then Lemma 4.2 is proved.

If both components are fast decaying on $r\in [l_k,r_k]$, then $\lim\limits_{k \rightarrow+ \infty}(\sigma^{k}_{1}(r_k),\sigma^{k}_{2}(r_k))=(\hat{\sigma}_1,\hat{\sigma}_2)$ also satisfies the P.I.(\ref{pi-3}). If $\hat{\sigma}_j>\sigma_j$, then $j\neq l$ by \eqref{sigma.lk}. We choose $r_k^*\leq r_k$ such that $\sigma_j(r_k^*)=\sigma_j^k(l_k)+\varepsilon_0$ for small $\varepsilon_0$, and let $\sigma_j^*=\lim_{k\to0}\sigma_j(r_k^*).$ Then $\sigma_j^*$ and $\sigma_l$ satisfies the P.I.(\ref{pi-3}) and it yields a contradiction provided $\varepsilon_0$ is small. Thus, we have $\sigma_m^k(r_k)=\sigma_m^k(l_k)+o(1),~m=1,2$. Then (\ref{fourd}) holds at $r=r_k$ which implies $r_k=s_k$, and Lemma \ref{le4.2} is proved in this case.

If one of the components cannot have fast decay on $[l_k,r_k]$. We have $l=i$ and $u_j^k,j\neq i$,  has slow decay on $\partial B(0,r_k^*)$ for some $r_k^*\leq r_k$. If $s_k/r_k\leq C$, then \eqref{sigma.lk} implies the lemma. If $s_k/r_k\to+\infty$, then by the scaling of $u_j^k$ at $r=r_k^*$, the standard argument implies that there is a sequence of $r_k^*\ll\tilde{r}_k=R_kr_k^*\ll s_k$ such that both components have fast decay on $\tilde{r}_k$ and
$$\sigma_i^k(\tilde{r}_k)=\sigma_i(r_k^*)+o(1)=\sigma_i(l_k)+o(1),~\mathrm{and}~\sigma_j^k(\tilde{r}_k)\geq\sigma_j^k(l_k)+\varepsilon_0$$
for some $j\neq i$ and $\varepsilon_0>0$. Therefore the assumption of Lemma \ref{le4.2} holds at $r\in[\tilde{r}_k,s_k]$. Then we
repeat the argument starting from (\ref{fourd}) and the lemma can be proved in a finite steps.
\end{proof}

\noindent {\bf Remark 4.3.} {\em Both lemmas will be used in section 6 (and section 5) for the case with singularity at $0$ (and without singularity at $0$).}
\bigskip

\section{Local mass on the bubbling disk centered at $x_i^k\neq 0$}
\subsection{}

In this subsection we study the local behavior of $u^k$ near $x_l^k$ where $x_{l}^{k} \neq 0$. For simplicity, we use $x^k$ instead of $x_l^k$ and
$\bar u_i^k(r)$ rather than $\bar u_{x^k,i}(r)$. Let
$$\tau^k=\frac 12\mathrm{ dist}(x^k,\Sigma_k\setminus \{x^k\}) \quad
\sigma_i^k(r)=\frac 1{2\pi}\int_{B(x^k,r)}h_i^ke^{u_i^k},\quad i=1,2. $$
By Proposition 3A, $l_k\le \tau^k$. Clearly $u^k=(u_1^k,u_2^k)$ satisfies
$$\Delta u_i^k+\sum_j k_{ij}h_j^k e^{u_j^k}=0,\quad \mbox{ in }\quad B(x^k,\tau^k). $$

For a sequence $s_k,$ we define
\begin{equation}
\label{sigma-def}
\hat{\sigma_{i}}(s_{k})=\left\{\begin{array}{ll}
\lim\limits_{k \rightarrow +\infty}\sigma_{i}^{k}(s_k)\ \text{if}\ u^{k}_{i}\ \text{has fast decay on}\ \partial B(x^{k},s_k),\\
\\
\lim\limits_{r \rightarrow 0}\lim\limits_{k \rightarrow +\infty}\sigma_{i}^{k}(rs_k)\ \text{if}\ u^{k}_{i}\ \text{has slow decay on}\ \partial B(x^{k},s_k),
\end{array}\right.
\end{equation}
Recall that both $u_{i}^{k}$ have fast decay on $\partial B(x^{k},l_{k})$ (see (3.4)). This is the starting point of the following proposition, which is a special case of Proposition \ref{loc-mass-2} below.

In Proposition \ref{local-mass-type-1}, $(\mu_1,\mu_2)$ will be $(1,1)$ in both lemmas of section 4.

\begin{prop}
\label{local-mass-type-1}
Let $u^k=(u_1^k,u_2^k)$ be the solutions of (\ref{toda-b2g2}) satisfying (\ref{asm-u}) and $\hat{\sigma}_i(s_k)$ be defined in \eqref{sigma-def}, the followings hold:
\begin{enumerate}
  \item At least one component $u^k$ has fast decay on $\partial B(x^k,\tau^{k})$,
  \item $(\hat{\sigma_{1}}(\tau_{k}),\hat{\sigma_{2}}(\tau_{k}))$ satisfies the P.I.(\ref{pi-3}) with $\mu_1=\mu_2=1$,
  \item $(\hat{\sigma_{1}}(\tau_{k}),\hat{\sigma_{2}}(\tau_{k})) \in \Gamma(1,1)$.
\end{enumerate}
\end{prop}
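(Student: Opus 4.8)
The plan is to track the radial evolution of the partial masses $(\sigma_1^k(r),\sigma_2^k(r))$ as $r$ grows from $l_k$ to $\tau^k$, and to show that whenever both components are simultaneously fast-decaying the limiting pair solves the P.I. \eqref{pi-3}, so that the terminal pair is produced from the initial bubble mass by finitely many steps of the algorithm generating $\Gamma(1,1)$. First I would record that on the annulus $l_k\le |x-x^k|\le \tau^k$ one has $\mathrm{dist}(x,\Sigma_k)=|x-x^k|$ by the definition of $\tau^k$, so the Harnack bound \eqref{1.harnack} of Proposition 3A reads $u_i^k(x)+2\log|x-x^k|\le C$ there. This is exactly the hypothesis needed for Proposition 3B and for both Lemma \ref{le4.1} and Lemma \ref{le4.2}, with base radius $l_k$ and arbitrary target $s_k\le\tau^k$; moreover, since $x^k\neq 0$ is not a singular source we have $\mu_1=\mu_2=1$ throughout. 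The starting point is that both $u_i^k$ are fast-decaying on $\partial B(x^k,l_k)$, whence Proposition 3C gives that the initial pair $(\sigma_1,\sigma_2):=\lim_k(\sigma_1^k(l_k),\sigma_2^k(l_k))$ satisfies \eqref{pi-3}; by the one-bubble classification of cases (a)--(b) in Proposition 3A this initial pair is the special element of $\Gamma(1,1)$ (a full-system bubble, case (a)) or one of the single-Liouville values $(2,0)$, $(0,2)$ (case (b)), all of which lie in $\Gamma(1,1)$.

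Next I would set up a finite induction over fast-decay radii $l_k=r_0^k<r_1^k<\cdots\le\tau^k$. Given a radius $r_m^k$ at which both components are fast-decaying with limiting masses $(a_1,a_2)$ solving \eqref{pi-3}, there are two cases. If both components stay fast-decaying up to $\tau^k$, then Lemma \ref{le4.2} applied to each component gives $\hat\sigma_i(\tau_k)=a_i$ and both remain fast-decaying at $\tau^k$, so the process stops. Otherwise some component develops slow decay; by Lemma \ref{le4.1}(b) the two components can never be slow-decaying on the same sphere, so exactly one, say $u_j^k$, becomes slow-decaying while the spectator $u_i^k$ ($i\neq j$) stays fast-decaying. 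Lemma \ref{le4.2} then freezes the spectator mass at $a_i$, while the slow decay of $u_j^k$ marks a new bubbling scale; a rescaling of $u_j^k$ at this scale produces a single Liouville bubble and yields a larger radius $r_{m+1}^k\le\tau^k$ at which $u_j^k$ resumes fast decay, both components are again fast-decaying, and Proposition 3C gives a new pair $(a_1',a_2')$ solving \eqref{pi-3} with $a_i'=a_i$ and $a_j'>a_j$ (masses can only increase with $r$).

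The algebraic heart is that, with $a_i$ held fixed, the P.I. \eqref{pi-3} — equivalently \eqref{fourpi} — is a quadratic in the remaining mass whose two roots have sum $2\mu_j-k_{ji}a_i$; one root is the old value $a_j$, so the transition is forced to be $a_j'=2\mu_j-k_{ji}a_i-a_j$, the larger root. This is precisely one step of the iteration described after Theorem \ref{thm-main} that generates $\Gamma(\mu_1,\mu_2)$, so each transition keeps the pair inside $\Gamma(1,1)$. Since every transition adds a fixed positive quantum of mass while the total mass is bounded by assumption (iii) of \eqref{asm-u}, only finitely many transitions can occur before the radius reaches $\tau^k$; the induction therefore terminates at $\tau^k$ with at least one component fast-decaying and $(\hat\sigma_1(\tau_k),\hat\sigma_2(\tau_k))\in\Gamma(1,1)$ satisfying \eqref{pi-3}. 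This establishes (1), (2) and (3) at once.

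I expect the main obstacle to be the slow-to-fast return step: rigorously producing the radius $r_{m+1}^k$ at which the slow-decaying component $u_j^k$ resumes fast decay, and proving that the mass it gains in between is exactly the quantum predicted by the root-flip, so that the new pair again solves \eqref{pi-3} rather than overshooting. This needs a careful rescaling argument combined with the single-bubble classification and with the spectator-mass conservation of Lemma \ref{le4.2}; the transition annuli, where neither the fast- nor the slow-decay alternative has yet been decided, are the technical crux. Verifying termination — that the strictly increasing, quantized total mass forces only finitely many transitions before $\tau^k$ — is comparatively routine given \eqref{asm-u}.
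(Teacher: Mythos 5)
Your proposal is correct and takes essentially the same route as the paper's own proof: an induction over fast-decay radii driven by Lemma \ref{le4.1} and Lemma \ref{le4.2}, with the slow-decaying component rescaled to a single Liouville bubble, the spectator mass frozen, and the new mass computed by the P.I.\ root-flip, which is exactly one step of the algorithm generating $\Gamma(1,1)$, terminating after finitely many quantized mass increments. The ``slow-to-fast return'' you flag as the crux is resolved in the paper precisely with the tools you name: the transition radius $s_k$ is pinned down by an $\varepsilon_0$-mass-threshold (so that Lemmas \ref{le4.1}--\ref{le4.2} force slow decay of one component and fast decay of the other there), the return radius is taken as $N_k^*s_k$ where the limiting bubble has exhausted its finite energy, and the boundary case $\tau_k/s_k\le C$ (where your claimed return radius $r_{m+1}^k\le\tau^k$ need not exist) is trivial because the definition \eqref{sigma-def} of $\hat\sigma_i$ for a slow-decaying component already discards the still-forming bubble.
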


\begin{proof}
If $\tau_k/l_k \leqslant C$, (1)-(3) holds obviously for $\tau_{k}$. So we assume $\tau_k/l_k \rightarrow + \infty$. First we remark that if $u^k$ is fully bubbling in $B(x^k,l_k)$ (i,e, (a) in Proposition 3A holds), $(\hat{\sigma}_1(l_k),\hat{\sigma}_2(l_k))$ is special (see Definition 1.4) and satisfies $$2\mu_i-\sum_{j=1}^2k_{ij}\hat\sigma_j(l_k)<0,i=1,2.$$ Then by Lemma \ref{le4.1}, both $u_{i}^{k}$ have fast decay on $\partial B(0,\tau^{k})$ and Proposition \ref{local-mass-type-1} follows immediately.

Now we assume $v_{i}^k$ defined in \eqref{scal-v} and satisfies case(b) in Proposition 3A. From (3.4), we already knew that both components have fast decay at $r=l_k$. If both components remain fast decay as $r$ increases from $l_k$ to $\tau_k$, Lemma \ref{le4.2} implies
$$\sigma_1^k(\tau_k)=\sigma_1^k(l_k)+o(1),~\quad\sigma_2^k(\tau_k)=\sigma_2^k(l_k)+o(1)$$
and we are done. So we only consider the case that at least one component changes to a slow decay component. For simplicity, we assume that $u_1^k$ changes to slow decay for some $r_k \gg l_k$. By Lemma \ref{le4.2},
\begin{equation*}
\sigma_1^{k}(B(x^k,r_k)) \geqslant \sigma_{1}(B(x^{k},l_{k})) + c_0,~\mathrm{for~some}~c_0 > 0.
\end{equation*}
We might choose $s_k \leqslant r_k$ such that
$$\sigma_{1}^{k}(B(x^{k},s_{k}))=\sigma_{1}^{k}(B(x^{k},l_{k}))+\varepsilon_{0},$$
and
$$\sigma_{1}^{k}(B(x^{k},r)) < \sigma_{1}^{k}(B(x^{k},l_{k}))+\varepsilon_{0}\ \forall\ r < s_{k},$$
where $\varepsilon_{0}< \frac{c_0}{2}$ is small.

Then Lemma \ref{le4.1} and Lemma \ref{le4.2} together implies $u_1^k$ has slow-decay on $\partial B(x^{k},s_{k})$ and $u_2^k$ has fast decay on $\partial B(x^k,s_k)$ with
$$\hat\sigma_{1}(s_k)=\sigma_{1}^{k}(l_{k})+o(1)~\mathrm{and}~\hat{\sigma_2}(s_k)=\sigma_{2}^{k}(l_k)+o(1).$$
Let $v_{i}^{k}(y)=u_{i}^k(x^k+s_ky)+2\log\ s_k$. If $\tau_k/s_k\leq C$ there is nothing to prove. So we assume $\tau_k/s_k\to\infty$. Then $v_{1}^{k}(y)$ converges to $v_{1}(y)$ and $v_{2}^{k}(y) \rightarrow -\infty$ in any compact set of $\mathbb{R}^{2}$ as $k \rightarrow + \infty$ and $v_{1}(y)$ satisfies
\begin{equation}
\Delta v_1 + 2e^{v_1} = -2\pi \sum (k_{1j}\hat{\sigma_{j}}(l_k))\delta(0)\ \text{in}\ \mathbb{R}^{2}.
\end{equation}
Hence there is a sequence $N_{k}^{*}$ and $N_k^* \rightarrow +\infty$ as $k \to +\infty$ such that $N_k^*s_k\leq\tau_k$ and
\begin{equation*}
\int_{B(0,N_{k}^{*})} e^{v_1} dy = \int_{\mathbb{R}^{2}} e^{v_1} dy + o(1),\ \text{and}
\end{equation*}
both $v_{i}^{k}(y)+2\log |y| \leqslant -N_{k}$ for $|y|=N_{k}^{*}$. Scaling back to $u_{i}^{k}$, we obtain that $u_{i}^{k},~i=1,2,$ have fast-decay on $\partial B(x^k,N_{k}^*s_{k})$.

We could use the classification theorem of Prajapat and Tarantello \cite{prajapat} to calculate the total mass of $v_1$, but instead we use the P.I.(\ref{pi-3}) to compute it. We know that both $(\hat{\sigma_{1}}(l_k),\hat{\sigma_{2}}(l_k))$ and $(\hat{\sigma_{1}}(N_{k}^*s_k),\hat{\sigma_{2}}(N_{k}^*s_{k}))$ satisfy P.I. and $\hat{\sigma}_{2}(N_{k}^*s_k)=\hat{\sigma_2}(l_k)$ by Lemma \ref{le4.2}. With a fixed $\sigma_2 = \hat{\sigma_2}(l_k)$, the equation P.I. (\ref{pi-3}) is a quadratic polynomial in $\sigma_1$, then $\hat{\sigma_1}(l_k)$ and $\hat{\sigma_1}(N_{k}^*s_{k})$ are two roots of the polynomial. From it, we could easily calculate $\hat{\sigma_1}(N_{k}^*s_{k})$.

By a direct computation, we have
$$(\hat{\sigma_1}(N_{k}^*s_k),\hat{\sigma_2}(N_{k}^*s_k)) \in \Gamma(1,1)~\mathrm{if}~(\hat{\sigma_1}(l_k),\hat{\sigma_2}(l_k))\in \Gamma(1,1).$$
Thus (1)-(3) hold at $r=N_{k}^*s_{k}$. By denoting $N_{k}^*s_k$ as $l_k$, we could repeat the same argument until $\tau_k/l_k \leqslant C$. Hence Proposition \ref{local-mass-type-1} is proved.
\end{proof}

\subsection{Local mass in a group that does not contain $0$}

In this subsection we collect some $x_i^k\in \Sigma_k$ into a group $S$, a subset of $\Sigma_k$ satisfying the following $S$-condition:
\medskip

\noindent(1). $0\not \in S$ and $|S|\ge 2$.

\noindent(2). If $|S|\ge 3$ and $x_i^k$, $x_j^k$, $x_l^k$ are three distinct elements in $S$, then
$$\mbox{dist}(x_i^k,x_j^k)\le C\mbox{dist}(x_j^k,x_l^k)$$
\indent \quad for some constant $C$ independent of $k$.

\noindent(3). For any $x_m^k\in \Sigma_k\setminus S$, the ratio $\mbox{dist}(x_m^k,S)/\mbox{dist}(x_i^k,x_j^k)\to \infty$ as $k\to \infty$ where $x_i^k,x_j^k\in S$.

We write $S$ as
$S=\{x_1^k,...,x_m^k\}$ and let
\begin{equation}
\label{fivelk}
l^k(S)=2\max_{1\le j\le m}\mbox{dist}(x_1^k,x_j^k).
\end{equation}
Recall $\tau_{l,k}=\frac 12\mbox{dist}(x_l^k,\Sigma_k\setminus \{x_l^k\})$, by (2) above we have $l^k(S)\sim \tau_i^k$ for $1\le i\le m$. Let
$$\tau_S^k=\frac 12\mbox{dist}(x_1^k,\Sigma_k\setminus S) $$
then by (3) above $\tau_S^k/\tau_i^k\to \infty$ for any $x_i^k\in S$.

By Proposition \ref{local-mass-type-1}, we know that at least one of $u_i^k$ has fast decay on $\partial B(x_1^k,\tau_1^k)$. Suppose $u_1^k$ has fast decay on $\partial B(x_1^k,\tau_1^k)$.
%Then we let $$l^k(S)=2\max_{1\le j\le m} dist(x_1^j,x_j^k).$$
Then
\begin{equation}
\label{u1k-fd}
\mbox{ $u_1^k$ has fast decay on $\partial B(x_1^k,l^k(S))$,}
\end{equation}
and Proposition \ref{local-mass-type-1} implies
\begin{align*}
\sigma_1^k(B(x_1^k,l^k(S)))
&=\frac 1{2\pi}\int_{B(x_1^k,l^k(S))}h_1^ke^{u_1^k}dx\\
&=\frac 1{2\pi}\int_{\cup_{j=1}^mB(x_j^k,\tau_j^k)}h_1^ke^{u_1^k}+\frac 1{2\pi}\int_{B(x_1^k,l^k(S))\setminus (\cup_{j=1}^mB(x_j^k,\tau_j^k))}h_1^ke^{u_1^k}.
\end{align*}
Since $u_1^k$ has fast decay outside of $B(x_j^k,\tau_j^k)$, we have
$$e^{u_1^k(x)}\le o(1)\max_{j}\{|x-x_j^k|^{-2}\},~\mathrm{for}~x\notin\bigcup_{j=1}^kB(x_j^k,\tau_j^k)$$
and the second integral is $o(1)$. Hence by Proposition \ref{local-mass-type-1},
\begin{equation}\label{52-e1}
\sigma_1^k(B(x_1^k,l^k(S)))=2m_1+o(1) \quad \mbox{ for some } m_1\in \mathbb N\cup\{0\}.
\end{equation}

Similarly if $u_2^k$ has fast decay on $\partial B(x_1^k,\tau_1^k)$, we have
\begin{equation}\label{52-e2}
\sigma_2^k(B(x_1^k,l^k(S)))=2m_2+o(1) \quad \mbox{ for some } m_2\in \mathbb N\cup\{0\}.
\end{equation}

If $u_2^k$ has slow decay on $\partial B(x_1^k,\tau_1^k)$, then it is easy to see that $u_2^k$ has slow decay on $\partial B(x_j^k,\tau_j^k)$.  By Proposition \ref{local-mass-type-1} we denote $n_{i,j}\in \mathbb N$ by
$$2n_{i,j}=\lim_{r\to 0}\lim_{k\to \infty} \sigma_i^k(B(x_j^k,r\tau_j^k)), \quad 1\le j\le m, \quad i=1,2. $$
Set $\hat n_{i,j}$ by
$$\hat n_{i,j}=-\sum_{l=1}^2k_{il}n_{l,j}. $$
Then the slow decay of $u_2^k$ on $\partial B(x_j^k,\tau_j^k)$ implies $1+\hat n_{i,j}>0$. Since $\hat n_{i,j}\in \mathbb Z$ we have $\hat n_{i,j}\ge 0$.

Furthermore, if we scale $u^k$ by
$$v_i^k(y)=u_i^k(x_1^k+l^k(S)y)+2\log l^k(S), \quad i=1,2, $$
the sequence $v_2^k$ would converge to $v_2(y)$ and $v_1^k$ tends to $-\infty$ over any compact subset of $\mathbb R^2\setminus \{0\}$. Then $v_2$ satisfies
\begin{equation}
\label{52-e3-1}
\Delta v_2(y)+2e^{v_2(y)}=4\pi\sum_{j=1}^m \hat n_{i,j}\delta_{p_j} \quad \mbox{ in } \quad \mathbb R^2.
\end{equation}
where $p_j=\lim_{k\to \infty}(x_j^k-x_1^k)/l^k(S)$.
By Theorem \ref{global-energy}
$$ \frac 1{2\pi}\int_{\mathbb R^2}e^{v_2}=2N, \quad \mbox{ for some } N\in \mathbb N. $$
Thus using the argument in Proposition \ref{local-mass-type-1}, we conclude that there is a sequence of $N_k^*\to \infty$ such that both $u_i^k$ ($i=1,2$) have fast decay on $\partial B(x_1^k,N_k^*l^k(S))$ and $\sigma_i^k(B(x_1^k,N_k^*l^k(S)))=2m_i+o(1)$. Denote $N_k^*l^k(S)$ by $l_k$ for simplicity, then we see that (\ref{52-e1}) and (\ref{52-e2}) hold at $l_k$. Then by using Lemma \ref{le4.1} and Lemma \ref{le4.2} we could continue this process to obtain the following conclusion:
\begin{equation}
\label{52-e3}
\mbox{At least one component of}~u^k~\mbox{has fast decay on}~\partial B(x_1^k,\tau_S^k).
\end{equation}
Let $\hat{\sigma}_i^k(B(x_1^k,\tau_s^k))$ be defined as in (\ref{sigma-def}). Then
\begin{equation}
\label{52-e4}
\hat{\sigma}_i^k(B(x_1^k,\tau_S^k))=2m_i(S)+o(1), \quad \mbox{where } m_i(S)\in \mathbb{N}\cup\{0\},
\end{equation}
and the pair $(2m_1(S),2m_2(S))$ satisfies the P.I.(\ref{pi-3}).
\medskip

Denote the group $S$ by $S_1$. Based on this procedure, we could continue to select a new group $S_2$ such that $S$-condition holds except we have to modify condition-(2). In (2), we consider $S_1$ as an single point as long as we compare the distance of distinct elements in $S_2$.

Set
$$\tau_{S_2}^k=\frac 12\mbox{dist}(x_1^k,\Sigma_k\setminus S_2 ),\quad \mbox{for } \quad x_1^k\in S_2. $$
Then we follow the same argument as above to obtain the same conclusion as (\ref{52-e3})-(\ref{52-e4}).

If equation (\ref{toda-b2g2}) does not contain singularity, the final step is to collect all $x_i^k$ into one single biggest group and (\ref{52-e3})-(\ref{52-e4}) hold.  Then we get $(\sigma_1,\sigma_2)=(2m_1,2m_2)$ satisfies the Pohozaev identity. By a direct computation, we could prove that all the pairs of even integer solution of (\ref{pi-3}) is exactly $\Gamma(1,1)$. This proves Theorem \ref{thm-main} if (\ref{toda-b2g2}) has no singularities.
\medskip

\noindent {\em Proof of Corollary 1.5.} The first part is already proved. For the last part, we want to prove $|S_1|=1$. We observe: for any $(\sigma_1,\sigma_2)\in\Gamma(1,1)$, if $(\sigma_1,\sigma_2)$ is not special then $(\sigma_1,\sigma_2)$ can not be written as a sum of $(\sigma_1^{(1)},\sigma_2^{(1)})$ and $(\sigma_1^{(2)},\sigma_2^{(2)})$, where $(\sigma_1^{(i)},\sigma_2^{(i)})\in\Gamma(1,1).$ Now if $|S_1|\geq2$, then $(\sigma_1(\tau_s^k),\sigma_2(\tau_s^k))$ can be written as a sum of $(\sigma_1^{(1)},\sigma_2^{(1)})$ and $(\sigma_1^{(2)},\sigma_2^{(2)})$, where $(\sigma_1^{(i)},\sigma_2^{(i)})\in\Gamma(1,1).$ But if $(\sigma_1(\tau_s^k),\sigma_2(\tau_s^k))$ is not special, then it can not be written in this way. \hfill  $\square$
\medskip

If $0$ is a singularity of (\ref{toda-b2g2}) then $\Sigma_k$ could be written as a disjoint union of $\{0\}$ and $S_j$ ($j=1,..,m$). Here each $S_j$ is collected by the process described above and is maximal in the following sense:
\medskip

\noindent(i). $0\not \in S$, $|S|\ge 2$ and for any two distinct points $x_i^k,x_j^k$ in $S$ we have $$\mbox{dist}(x_i^k,x_j^k)\ll\tau^k(S),$$
\indent ~where $\tau^k(S)=\mbox{dist}(S,\Sigma_k\setminus S)$.

\noindent(ii). For any $0\neq x_i^k\in \Sigma_k\setminus S$,
$$\mbox{dist}(x_i^k,0)\le C \mbox{dist}(x_i^k,S) $$
for some constant $C$.
\medskip

For $S_j$ we define
$$\tau_{S_j}^k=\frac 12\mbox{dist}(S_j,\Sigma_k\setminus S_j). $$
Then the process described above proves the main result of this section:

\begin{prop}
\label{loc-mass-2}
Let $S_j$ ($j=1,..,m$) be described as above, then
(\ref{52-e3})-(\ref{52-e4}) holds where $B(x_1^k,\tau_S^k)$ is replaced by $B(x_i^k,\tau_{S_j}^k)$ and $x_i^k$ is any element in $S_j$.
\end{prop}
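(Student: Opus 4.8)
The plan is to run the very same selection-and-growth argument already carried out in this subsection for a single group $S$ not containing $0$; the one new point to verify is that the Dirac source at $0$ never enters any ball on which the analysis is performed. First I would record the key geometric fact: since $0\in\Sigma_k\setminus S_j$, the definition $\tau_{S_j}^k=\frac 12\mathrm{dist}(S_j,\Sigma_k\setminus S_j)$ forces $\mathrm{dist}(x_i^k,0)\ge 2\tau_{S_j}^k$ for every $x_i^k\in S_j$, using condition (i) that the points of $S_j$ are mutually much closer than $\tau^k(S_j)$. Consequently $0\notin B(x_i^k,2\tau_{S_j}^k)$, so on this ball $u^k$ solves the regular Toda system $\Delta u_i^k+\sum_j k_{ij}h_j^k e^{u_j^k}=0$ and we are back in exactly the situation treated before, with $\mu_1=\mu_2=1$.

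With this reduction in hand I would reproduce the growth argument verbatim. By Proposition \ref{local-mass-type-1}, for each $x_i^k\in S_j$ at least one component has fast decay on $\partial B(x_i^k,\tau_i^k)$ with local mass in $\Gamma(1,1)$; expanding to the group scale $l^k(S_j)$ as in the passage from (\ref{fivelk}) to (\ref{52-e2}) yields even-integer masses for each fast-decay component, the contribution of the annular region between consecutive sub-balls being $o(1)$ via the fast-decay bound $e^{u_i^k(x)}\le o(1)\max_j|x-x_j^k|^{-2}$. The delicate case is when the surviving component, say $u_2^k$, stays slow-decaying: scaling $v_2^k(y)=u_2^k(x_1^k+l^k(S_j)y)+2\log l^k(S_j)$ produces a limit solving a singular Liouville equation $\Delta v_2+2e^{v_2}=4\pi\sum_j\hat n_{2,j}\delta_{p_j}$ whose weights $\hat n_{2,j}$ are nonnegative integers, so Theorem \ref{global-energy} forces $\frac{1}{2\pi}\int_{\mathbb R^2}e^{v_2}\in 2\mathbb N$. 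This produces a larger radius $N_k^*l^k(S_j)$ on which both components are again fast-decaying with even-integer masses, and Lemma \ref{le4.1} together with Lemma \ref{le4.2} lets me iterate.

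Each iteration either keeps both masses constant (Lemma \ref{le4.2}, when both components remain fast-decaying) or, when a component transitions to slow decay, increases the corresponding mass by a positive even integer via the Liouville classification above; since the total mass is bounded a priori by assumption (iii) of (\ref{asm-u}), the process terminates after finitely many steps at the radius $\tau_{S_j}^k$. At that terminal radius at least one component has fast decay, which is (\ref{52-e3}), and both recorded masses are even integers, giving (\ref{52-e4}). For the Pohozaev identity (\ref{pi-3}) in (\ref{52-e4}) I would invoke Proposition 3C, applied either on $\partial B(x_i^k,\tau_{S_j}^k)$ when both components are fast-decaying there, or, if one component is still slow-decaying, on a slightly larger sphere obtained by one further scaling on which both components decay fast while carrying the same recorded masses $\hat\sigma_i$ --- this is precisely what the two-case definition (\ref{sigma-def}) of $\hat\sigma_i$ is arranged to guarantee.

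I expect the main obstacle to be the bookkeeping rather than any new idea: one must check that the ``group-as-a-single-point'' convention used to define $S_j$ makes the ratios $\tau_{S_j}^k/l^k(S_j)\to\infty$, so that the intermediate scalings possess genuine limits, and that the exclusion $0\notin B(x_i^k,r)$ persists for every intermediate radius $r\le\tau_{S_j}^k$. Both are guaranteed by the maximality conditions (i)--(ii) defining $S_j$, which is exactly why the singular point $0$ never contaminates the local mass computation on the groups $S_j$.
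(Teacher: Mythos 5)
Your proposal follows essentially the same route as the paper, whose own ``proof'' of Proposition \ref{loc-mass-2} is just the observation that the process of subsection 5.2 --- Proposition \ref{local-mass-type-1} on each sub-ball, the $o(1)$ annular estimate from fast decay, the singular-Liouville scaling limit quantized by Theorem \ref{global-energy}, and the iteration driven by Lemmas \ref{le4.1} and \ref{le4.2} --- runs unchanged inside each maximal group $S_j$. Your key geometric reduction is also the right one: $\mathrm{dist}(x_i^k,0)\ge 2\tau_{S_j}^k$ follows directly from the definition of $\tau_{S_j}^k$, so the Dirac source never enters any ball (or doubled ball) used in the argument and the regular case $\mu_1=\mu_2=1$ applies throughout.

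One step is misstated, however. In the terminal case where a component is still slow-decaying on $\partial B(x_i^k,\tau_{S_j}^k)$, you propose to verify the Pohozaev identity ``on a slightly larger sphere obtained by one further scaling \ldots carrying the same recorded masses.'' This would fail if executed literally: a sphere of radius larger than $\tau_{S_j}^k$ leaves the region where Proposition 3C is applicable (both the condition $0\notin B(x_i^k,2r)$ and the absence of other elements of $\Sigma_k$ in the ball can break down), and on any such sphere where both components decay fast the slow-decaying component would already have collected an \emph{additional} positive even-integer mass, so the masses there are not the recorded $\hat\sigma_i$. The correct move --- and the very reason for the two-case definition (\ref{sigma-def}) --- is to go to a \emph{smaller} radius: the last checkpoint radius $\ll\tau_{S_j}^k$ on which both components decay fast. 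There Proposition 3C gives the identity (\ref{pi-3}), and Lemma \ref{le4.2} together with the $r\to0$ limit in (\ref{sigma-def}) shows that the pair recorded at $\tau_{S_j}^k$ coincides with the pair at that checkpoint, which yields (\ref{52-e3})--(\ref{52-e4}) as claimed.
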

\medskip
%At the end of this section, we provide the proof of Theorem \ref{one-concen}.
%\medskip
%
%\noindent {\em Proof of Theorem \ref{one-concen}.} From the previous discussion of this section, we get at least one component

\section{Proof of Theorem \ref{one-concen}, Theorem \ref{thm-main}, Theorem 1.6 and Theorem 1.7}

In Proposition \ref{loc-mass-2}, we write $\Sigma_k=\{0\}\cup S_1\cup\cdots\cup S_N$. From the construction, the ratio $\frac{|x^k|}{|\tilde{x}^k|}\leq C$ for any $x^k,{\tilde x}^k\in S_j$. Let
$$\|S_j\|=\min_{x^k\in S_j}|x^k|$$
and arrange $S_j$ by
$$\|S_1\|\leq\|S_2\|\leq\cdots\leq\|S_N\|.$$
Assume $l$ is the largest number such that $\|S_l\|\leq C\|S_1\|$. Then $\|S_l\|\ll\|S_{l+1}\|$.
\medskip

We recall the local mass contributed by $x_j^k\in S_j$ is
$$(\hat{\sigma}_1(B(x_{j}^{k},\tau^{k}_{j})),\hat{\sigma}_2(B(x_{j}^{k},\tau^{k}_{j})))=(m_{1,j},m_{2,j}),\quad
\mbox{where}~m_{1j},m_{2j}\in2\mathbb{N}\cup\{0\}.$$
Let
$$r_1^k=\frac12\|S_1\|.$$
By Proposition \ref{local-mass-type-1}, we have
\begin{align*}
u_i^k(x)+2\log|x|\leq C~\mathrm{for}~0<|x|\leq r_1^k, i=1,2.
\end{align*}
\medskip

\noindent{\em Proof of Theorem \ref{thm-main}.} Let
\begin{align*}
\tilde{u}_i^k(x)=u_i^k(x)+2\alpha_i\log|x|,~i=1,2.
\end{align*}
Then equation (\ref{toda-b2g2}) becomes
\begin{align*}
\Delta\tilde{u}_i^k(x)+\sum_{j=1}^2k_{ij}|x|^{2\alpha_j}h_j^k(x)e^{\tilde{u}_j^k(x)}=0,~|x|\leq r_1^k,~i=1,2.
\end{align*}
Let
\begin{equation}
\label{6.1}
-2\log\delta_k=\max_{i\in I}\max_{x\in\bar B(0,\tau_1^k)}\frac{\tilde{u}_i^k}{1+\alpha_i},
\end{equation}
then $\tilde{v}_i^k(y)$ defined as
\begin{equation}
\label{6.2}
\tilde{v}_i^k(y)=\tilde{u}_i^k(\delta_ky)+2(1+\alpha_i)\log\delta_k,~|y|\leq r_1^k/\delta_k,~i=1,2,
\end{equation}
satisfies
\begin{equation}
\label{6.3}
\Delta\tilde{v}_i^k(y)+\sum_{j\in I}k_{ij}|y|^{2\alpha_j}h_j^k(\delta_ky)e^{\tilde{v}_j^k(y)}=0,~|y|\leq r_1^k/\delta_k,~i=1,2.
\end{equation}
We have either
$$(a)~\lim_{k\to\infty} r_1^k/\delta_k=\infty\quad\mathrm{or}\quad(b)~r_1^k/\delta_k\leq C.$$
For (a). Our purpose is to prove a similar result as Proposition \ref{local-mass-type-1}:
\begin{enumerate}
  \item [(1).] {\em At most one component of $u^k$ has slow decay on $\partial B(0,r_1^k).$}

  \noindent As in section 5, we define
  \begin{equation*}
  \hat\sigma_{i,1}=\left\{\begin{array}{ll}
  \lim_{k\to+\infty}\sigma_i^k(B(0,r_1^k)) &\mbox{if}~u_i^k~\mbox{has fast decay on}~\partial B(0,r_1^k),\\
  \lim_{r\to0}\lim_{k\to+\infty}\sigma_i^k(B(0,rr_1^k)) &\mbox{if}~u_i^k~\mbox{has slow decay on}~\partial B(0,r_1^k),
  \end{array}\right.
  \end{equation*}
  \item [(2).] $(\hat\sigma_{1,1},\hat\sigma_{2,1})$ satisfies the Pohozaev identity (1.8), and
  \item [(3).] $\hat\sigma_{i,1}=2\sum_{i=1}^2n_i\mu_i+2n_3,~n_i\in\mathbb{Z}.$
\end{enumerate}
We carry out the proof in the discussion of the following two cases.
\medskip

\noindent Case 1. If both $\tilde{v}_i^k(y)$ converge in any compact set of $\mathbb{R}^2$, $(\sigma_1,\sigma_2)$ can be obtained by the classification theorem in \cite{lin-wei-ye}:
\begin{equation*}
(\sigma_1,\sigma_2)=\left\{\begin{array}{ll}
(2\mu_1+2\mu_2,2\mu_1+2\mu_2)~&\mathrm{for}~A_2,\\
(4\mu_1+2\mu_2,4\mu_1+4\mu_2)~&\mathrm{for}~B_2,\\
(8\mu_1+4\mu_2,12\mu_1+8\mu_2)~&\mathrm{for}~G_2.
\end{array}\right.
\end{equation*}
By Lemma \ref{le4.1}, both $u_i^k$ have fast decay on $\partial B(0,r_1^{k})$. So this proves (1)-(3) in this case.
\medskip

\noindent Case 2. Only one $\tilde{v}_i^k$ converges to $v_i(y)$ and the other tends to $-\infty$ uniformly in any compact set. Then it is easy to see that there is $l_k\ll r_1^k$ such that both $u_i^k$ have fast decay on $\partial B(0,l_k)$ and
$$(\sigma_1(B(0,l_k)),\sigma_2(B(0,l_k)))=(2\mu_1,0)~\mathrm{or}~(\sigma_1(B(0,l_k)),\sigma_2(B(0,l_k))=(0,2\mu_2)).$$
So this is the same situation as in the starting point for Proposition \ref{local-mass-type-1}. Then the same argument of Proposition \ref{local-mass-type-1} leads to the conclusion (1)-(3).

The pair $(\hat\sigma_{1,1},\hat\sigma_{2,1})$ can be calculated by the same method in Proposition 5.1. Then $(\hat\sigma_{1,1},\hat\sigma_{2,1})\in\Gamma(\mu_1,\mu_2)$, which is given in section 2.
%\begin{equation}
%\label{6.4}
%\begin{aligned}
%\mathbf{A_2}:~&(2\mu_1,0),(2\mu_1,2\mu_1+2\mu_2),(2\mu_1+2\mu_2,2\mu_1+2\mu_2),(0,2\mu_2),(2\mu_1+2\mu_2,2\mu_2),\\
%\mathbf{B_2}:~&(2\mu_1,0),(2\mu_1,4\mu_1+2\mu_2),(4\mu_1+2\mu_2,4\mu_1+2\mu_2),(4\mu_1+2\mu_2,4\mu_1+4\mu_2),\\
%&(0,2\mu_2),(2\mu_1+2\mu_2,2\mu_2),(2\mu_1+2\mu_2,4\mu_1+4\mu_2),~\mathrm{and}~\\
%\mathbf{G_2}:~&(2\mu_1,0),(2\mu_1,6\mu_1+2\mu_2),(6\mu_1+2\mu_2,6\mu_1+2\mu_2),(6\mu_1+2\mu_2,12\mu_1+6\mu_2),\\
%&(0,2\mu_2),(2\mu_1+2\mu_2,2\mu_2),(2\mu_1+2\mu_2,6\mu_1+6\mu_2),(6\mu_1+4\mu_2,6\mu_1+6\mu_2),\\
%&(8\mu_1+4\mu_2,12\mu_1+6\mu_2),(6\mu_1+4\mu_2,12\mu_1+8\mu_2),(8\mu_1+4\mu_2,12\mu_1+8\mu_2).
%\end{aligned}
%\end{equation}

To continue our discussion for $r\in[r_1^k,r_2^k]$, where we denote $\frac12\|S_{l+1}\|$ by $r_2^k$. We separate our discussion into two cases also.
\medskip

\noindent Case 1. One component has slow decay on $\partial B(0,r_1^k)$, say $u_1^k$. Then we scale
$$v_i^k(y)=u_i^k(r_1^ky)+2\log r_1^k.$$
By our assumption $v_1^k(y)$ converges to $v_1(y)$ and $v_2^k(y)\to-\infty$ in any compact set. Let $x_j^k\in S_j$ and $y_j^k=(r_1^k)^{-1}x_j^k\to p_j$. Then $v_1(y)$ satisfies
\begin{equation}
\label{6.4}
\Delta v_1+2e^{v_1}=4\pi\tilde{\alpha}_1\delta_0+4\pi\sum_{j=1}^m\tilde{n}_{1,j}\delta_{p_j},
\end{equation}
where
\begin{equation}
\label{6.5}
\tilde{n}_{1,j}=-\frac12\sum_{i=1}^2k_{1i}m_{i,j}~\mathrm{for~some}~m_{ij}\in\mathbb{Z}~\mathrm{and}
~\tilde\alpha_1=
%\alpha_1-\frac12\sum_{i=1}^2k_{1i}\hat\sigma_{i,1}=
\alpha_1-\hat{\sigma}_{1,1}+\frac12\hat\sigma_{2,1}.
\end{equation}
The finiteness of $\int_{\mathbb{R}^2}e^{v_1}$ implies that
$$\tilde\alpha_1>-1~\mathrm{and}~\tilde{n}_{1,j}\geq0.$$
By Theorem 2.2, we have
\begin{equation}
\label{6.6}
\frac{1}{2\pi}\int_{\mathbb{R}^2}e^{v_1}\mbox{d}y=2(\tilde\alpha_1+1)+2k_1~\mathrm{or}~\frac{1}{2\pi}\int_{\mathbb{R}^2}e^{v_1}\mbox{d}y=2k_2,
~\mathrm{where}~k_1,k_2\in\mathbb{Z}.
\end{equation}
As before, we can choose $l_k$, $r_1^k\ll l_k\ll r_2^k$ such that both $u_i^k$ have fast decay on $\partial B(0,l_k)$. Then the new pair $(\hat{\sigma}_{1,2},\hat{\sigma}_{2,2})$, which defined by
$$\hat{\sigma}_{t,2}=\frac{1}{2\pi}\lim_{k\to 0}\int_{B(0,l_k)}h_t^ke^{v_t^k},~t=1,2,$$
becomes
\begin{equation}
\label{6.7}
(\hat\sigma_{1,2},\hat\sigma_{2,2})=(\hat\sigma_{1,1}+\frac{1}{2\pi}\int_{\mathbb{R}^2}e^{v_1}+\sum_{j=1}^mm_{1,j},~\hat\sigma_{2,1}+\sum_{j=1}^mm_{2,j})
\end{equation}
for $m_{1j},m_{2j}\in2\mathbb{N}\cup\{0\}$. Using (\ref{6.6}), we get
\begin{equation}
\label{6.8}
\hat{\sigma}_{1,2}=\left\{\begin{array}{ll}
\hat{\sigma}_{1,1}+2k_2+\sum_{j=1}^{m}m_{1,j},~&\mathrm{if}~\frac{1}{2\pi}\int_{\mathbb{R}^2}e^{v_1}\mbox{d}y=2k_2,\\
2\mu_1+\hat\sigma_{2,1}-\hat\sigma_{1,1}+2k_1+\sum_{j=1}^mm_{1,j},~&\mathrm{if}~\frac{1}{2\pi}\int_{\mathbb{R}^2}e^{v_1}\mbox{d}y=2(\tilde\alpha_1+1)+2k_1.
\end{array}\right.
\end{equation}
We note that if $(\hat\sigma_{1,1},\hat\sigma_{2,1})\in\Gamma(\mu_1,\mu_2)$ and $2\mu_1+\hat\sigma_{2,1}-\hat\sigma_{1,1}>0$, then $$(2\mu_1+\hat\sigma_{2,1}-\hat\sigma_{1,1},\hat\sigma_{2,1})\in\Gamma(\mu_1,\mu_2).$$
Let $(\sigma_1^*,\sigma_2^*)=(2\mu_1+\hat\sigma_{2,1}-\hat\sigma_{1,1},~\hat{\sigma}_{2,1})$, we can write
\begin{equation}
\label{6.9}
(\hat\sigma_{1,2},\hat\sigma_{2,2})=(\sigma_1^*+m_1,~\sigma_2^*+m_2),
\end{equation}
with $(\sigma_1^*,\sigma_2^*)\in\Gamma(\mu_1,\mu_2)$ and $m_1,m_2\in2\mathbb{Z}.$
\medskip

\noindent Case 2. If both $u_i^k$ have fast decay on $\partial B(0,r_1^k),$ then they have fast decay on $\partial B(0,cr_1^k)$, where we choose $c$ bounded such that $\bigcup_{j=1}^mS_j\subset B(0,\frac{c}{2}r_1^k)$. Then the new pair $(\hat{\sigma}_{1,2},\hat{\sigma}_{2,2})$ becomes
\begin{equation*}
(\hat\sigma_{1,2},\hat\sigma_{2,2})=(\hat\sigma_{1,1}+\sum_{j=1}^mm_{1,j},~\hat\sigma_{2,1}+\sum_{j=1}^mm_{2,j})~\mbox{for}~m_{1,j},m_{2,j}\in2\mathbb{Z}.
\end{equation*}
Hence, in this case we can also write
\begin{equation}
\label{6.10}
(\hat\sigma_{1,2},\hat\sigma_{2,2})=(\sigma_1^*+m_1,~\sigma_2^*+m_2)
\end{equation}
with $(\sigma_1^*,\sigma_2^*)=(\hat\sigma_{1,1},\hat\sigma_{2,1})\in\Gamma(\mu_1,\mu_2)$ and $m_1,m_2\in2\mathbb{Z}.$ Denote $cr_1^k=l_k$. Then we can continue our process starting from $l_k$. After finite steps, we could prove that at most one component $u_1^k$ has slow decay on $\partial B(0,1)$ and their local masses have the expression in (3).

For case (b), i.e. $r^k_1/\delta_k\leq C$. Using $\tilde{v}_i^k\leq0$ we have $|y|^{2\alpha_j}h_j^k(\delta_ky)e^{\tilde{v}_j^k}\leq C$ on $B(0,r_1^k/\delta_k).$ Combined with the fact that $\tilde{v}_i^k$ has bounded oscillation on $\partial B(0,\tau_k/\delta_k)$ and $\tilde{v}_i^k\leq 0$ we get
\begin{equation*}
\tilde{v}_i^k(x)=\bar{\tilde{v}}_i^k(\partial B(0,r_1^k/\delta_k))+O(1)~\mathrm{for~all}~x\in B(r_1^k/\delta_k),
\end{equation*}
where $\bar{\tilde{v}}_i^k(\partial B(0,r_1^k/\delta_k))$ stands for the average of $\tilde{v}_i^k$ on $\partial B(0,r_1^k/\delta_k)$. Direct computation shows that
\begin{equation*}
\int_{B(0,r_1^k)}h_i^ke^{u_i^k}\mathrm{d}x=\int_{B(0,r_1^k/\delta_k)}h_i^k(\delta_k y)e^{\tilde{v}_i^k(y)}\mathrm{d}y=O(1)e^{\bar{\tilde{v}}_i^k(\partial B(0,r_1^k/\delta_k))}.
\end{equation*}
So $\int_{B(0,r_1^k)}h_i^ke^{u_i^k}\mathrm{d}x=o(1)$ if $\bar{\tilde{v}}_i^k(\partial B(0,r_1^k/\delta_k))\rightarrow-\infty$. On the other hand, we note that $\bar{\tilde{v}}_i^k(\partial B(0,r_1^k/\delta_k))\rightarrow-\infty$ is equivalent to $u_i^k$ having fast decay on $\partial B(0,r_1^k)$. As a consequence, we have $\hat\sigma_{i,1}=0$ if $u_i^k$ has fast decay on $\partial B(0,r_1^k)$. So if both two components have fast decay on $\partial B(0,r_1^k)$ we have $(\hat\sigma_{1,1},\hat\sigma_{1,2})=(0,0)$.

If some component of $u^k$ has slow decay on $\partial B(0,r_1^k)$, say $u_2^k$, then we choose $\tilde{l}_k$, $r_1^k\ll\tilde{l}_k\ll r_2^k$ such that
$$\sigma_2(B(0,\tilde{l}_k))=\sigma_2(B(0,r_1^k))=0$$
and both $u_i^k$ have fast decay on $\partial B(0,\tilde{l}_k)$. Then $(\sigma_1(B(0,\tilde{l}_k)),\sigma_2(B(0,\tilde{l}_k)))$ satisfies (\ref{6.9}) with $(\sigma_1^*,\sigma_2^*)=(\hat\sigma_{1,1},\hat\sigma_{2,1})=(0,0)$, which implies
$$\sigma_1(B(0,\tilde{l}_k))=0~\mathrm{or}~\sigma_1(B(0,\tilde{l}_k))=2\mu_1.$$
Hence in both cases, we could choose $\tilde{l}_k\ll r_2^k$ such that (1)-(3) holds on $\partial B(0,\tilde{l}_k).$ Afterwards, we continue our discussion as the case (a). Then Theorem 1.3 is proved completely. \hfill $\square$
\medskip

Next, we shall prove Theorem 1.6, that is $\Sigma_{k}=\{0\}$ by way of contradiction. Suppose $\Sigma_k$ has points other than $0.$ Using the notations from the beginning of this section, we have
$$\Sigma_k=\{0\}\cup S_1\cup\cdots\cup S_N.$$
Now suppose $r_1^k/\delta_k\rightarrow\infty$ as $k\to\infty$. Let $(\hat{\sigma}_{1,2},\hat\sigma_{2,2})$ be the local masses defined by (\ref{6.7}) for one of the component $u_i^k$ having slow decay on $\partial B(0,r_1^k)$ or by (\ref{6.8}) for both two components having fast decay on $\partial B(0,r_1^k)$. Then we recall the following result
\begin{itemize}
  \item[(i).] $\hat\sigma_{i,2}=\sigma^*_{i}+m_i,$ where $(\sigma_1^*,\sigma_2^*)\in\Gamma(\mu_1,\mu_2)$ and $m_i,i=1,2$ are even integers.
  \item[(ii).] Both pairs $(\sigma^*_{1},\sigma^*_{2})$ and $(\hat{\sigma}_{1,2},\hat\sigma_{2,2})$ satisfy the Pohozaev identity.
\end{itemize}

Based on the description above, we are able to prove Theorem 1.6.
\medskip

\noindent {\em Proof of Theorem 1.6.}
From the above discussion, we have
$$(\hat\sigma_{1,2},\hat\sigma_{2,2})=(\sigma^*_{1}+m_1,~\sigma^*_{2}+m_2).$$
We note that the conclusion of Theorem 1.5 is equivalent to show $m_i=0,i=1,2.$ In the following, we shall prove
$$m_i=0,i=1,2.$$
From the above discussion, we have both $(\hat\sigma_{1,2},\hat\sigma_{2,2})$ and $(\sigma_1^*,\sigma_2^*)$ satisfy the Pohozaev identity
\begin{equation}
\label{t7.1}
k_{21}\sigma_1^2+k_{12}k_{21}\sigma_1\sigma_2+k_{12}\sigma_2^2=2k_{21}\mu_1\sigma_1+2k_{12}\mu_2\sigma_2.
\end{equation}
We can write them as
\begin{equation}
\label{t7.2}
k_{21}(\sigma^*_{1})^2+k_{12}k_{21}\sigma^*_{1}\sigma^*_{2}+k_{12}(\sigma^*_{2})^2=2k_{21}\mu_1\sigma^*_{1}+2k_{12}\mu_2\sigma^*_{2},
\end{equation}
and
\begin{equation}
\label{t7.3}
\begin{aligned}
&k_{21}(\sigma^*_{1}+m_1)^2+k_{12}k_{21}(\sigma^*_{1}+m_1)(\sigma^*_{2}+m_2)+k_{12}(\sigma^*_{2}+m_2)^2\\
&=2k_{21}\mu_1(\sigma^*_{1}+m_1)+2k_{12}\mu_2(\sigma^*_{2}+m_2).
\end{aligned}
\end{equation}
We use \eqref{t7.3} and \eqref{t7.2} to get
\begin{equation}
\label{t7.4}
\begin{aligned}
&2k_{21}m_1\hat\sigma^*_{1}+k_{12}k_{21}m_2\hat\sigma^*_{1}+k_{12}k_{21}m_1\sigma^*_{2}+2k_{12}m_2\sigma^*_{2}\\
&=2k_{21}m_1\mu_1+2k_{12}m_2\mu_2-(k_{21}m_1^2+k_{12}k_{21}m_1m_2+k_{12}m_2^2).
\end{aligned}
\end{equation}
Since $(\sigma_1^*,\sigma_2^*)\in\Gamma(\mu_1,\mu_2)$, we set
$$\sigma^*_{1}=l_{1,1}\mu_1+l_{1,2}\mu_2,\quad\sigma^*_{2}=l_{2,1}\mu_1+l_{2,2}\mu_2.$$
Then we can rewrite \eqref{t7.4} as
\begin{equation}
\label{t7.5}
\begin{aligned}
&\left(2k_{21}l_{1,1}m_1+k_{12}k_{21}l_{2,1}m_1-2k_{21}m_1+2k_{12}l_{2,1}m_2+k_{12}k_{21}l_{1,1}m_2\right)\mu_1\\
&+\left(2k_{21}l_{1,2}m_1+k_{12}k_{21}l_{2,2}m_1+2k_{12}l_{2,2}m_2+k_{12}k_{21}l_{1,2}m_2-2k_{12}m_2\right)\mu_2\\
&+(k_{21}m_1^2+k_{12}k_{21}m_1m_2+k_{12}m_2^2)=0.
\end{aligned}
\end{equation}
Since $\mu_1,\mu_2$ and $1$ are linearly independent, we have the coefficients of $\mu_1$ and $\mu_2$ must vanish. Equivalently we have
\begin{equation}
\label{t7.6}
\left(\begin{matrix}
2k_{21}l_{1,1}+k_{12}k_{21}l_{2,1}-2k_{21} & 2k_{12}l_{2,1}+k_{12}k_{21}l_{1,1}\\
2k_{21}l_{1,2}+k_{12}k_{21}l_{2,2} & 2k_{12}l_{2,2}+k_{12}k_{21}l_{1,2}-2k_{12}
\end{matrix}\right)
\left(\begin{matrix}
m_1\\
m_2
\end{matrix}\right)=0
\end{equation}
Denote $M_{K}$ as
\begin{align*}
M_K=\left(\begin{matrix}
2k_{21}l_{1,1}+k_{12}k_{21}l_{2,1}-2k_{21} & 2k_{12}l_{2,1}+k_{12}k_{21}l_{1,1}\\
2k_{21}l_{1,2}+k_{12}k_{21}l_{2,2} & 2k_{12}l_{2,2}+k_{12}k_{21}l_{1,2}-2k_{12}
\end{matrix}\right).
\end{align*}
{\bf Claim}: $M_k$ is non-singular. We shall divide our proof into the following three cases.
\medskip

\noindent Case 1. $\mathbf{K}=A_2$. Then we can write (\ref{t7.6}) as
\begin{equation}
\label{t7.7}
\left(\begin{matrix}
2l_{1,1}-l_{2,1}-2 & 2l_{2,1}-l_{1,1}\\
2l_{1,2}-l_{2,2} & 2l_{2,2}-l_{1,2}-2
\end{matrix}\right)
\left(\begin{matrix}
m_1\\
m_2
\end{matrix}\right)=0.
\end{equation}
We note that
\begin{align*}
(l_{1,1},l_{1,2},l_{2,1},l_{2,2})\in\{(2,0,0,0),(0,0,0,2),(2,2,0,2),(2,0,2,2),(2,2,2,2)\}.
\end{align*}
Then it is easy to see that $M_K$ is non-singular when $(l_{1,1},l_{1,2},l_{2,1},l_{2,2})$ belongs the above set.
\medskip

\noindent Case 2. $\mathbf{K}=B_2$. Then we can write (\ref{t7.6}) as
\begin{equation}
\label{t7.8}
\left(\begin{matrix}
2l_{1,1}-l_{2,1}-2 & l_{2,1}-l_{1,1}\\
2l_{1,2}-l_{2,2} & l_{2,2}-l_{1,2}-1
\end{matrix}\right)
\left(\begin{matrix}
m_1\\
m_2
\end{matrix}\right)=0.
\end{equation}
We note that
\begin{align*}
(l_{1,1},l_{1,2},l_{2,1},l_{2,2})\in\{&(2,0,0,0),(2,0,4,2),(4,2,4,2),(0,0,0,2),\\
&(2,2,0,2),(2,2,4,4),(4,2,4,4)\}
\end{align*}
From the above set, we can see that $4|(l_{2,1}l_{1,1})(2l_{1,2}-l_{2,2})$. As a result, if the determinant of $M_K$ is $0$, we have to make $4|(2l_{1,1}-l_{2,1}-2)$, which forces $l_{2,1}\equiv2(mod4)$. However, this is impossible. Thus $M_k$ is non-singular in this case.
\medskip

\noindent Case 3. $\mathbf{K}=G_2$. Then we can write (\ref{t7.6}) as
\begin{equation}
\label{t7.9}
\left(\begin{matrix}
6l_{1,1}-3l_{2,1}-6 & 2l_{2,1}-3l_{1,1}\\
6l_{1,2}-3l_{2,2} & 2l_{2,2}-3l_{1,2}-2
\end{matrix}\right)
\left(\begin{matrix}
m_1\\
m_2
\end{matrix}\right)=0.
\end{equation}
We note that
\begin{align*}
(l_{1,1},l_{1,2},l_{2,1},l_{2,2})\in\{&(2,0,0,0),(2,0,6,2),(6,2,6,2),(6,2,12,6),\\
&(8,4,12,6),(8,4,12,8),(0,0,0,2),(2,2,0,2),\\
&(2,2,6,6),(6,4,6,6),(6,4,12,8)\}.
\end{align*}
From the above list, we note $3|l_{2,1}$, then we get $9|(2l_{2,1}-3l_{1,1})(6l_{1,2}-3l_{2,2})$. On the other hand, we note
$$l_{1,1}\equiv0,2(mod3)~\mathrm{and}~l_{2,2}\equiv0,2(mod3),$$
this implies $(6l_{1,1}-3l_{2,1}-6)(2l_{2,2}-3l_{1,2}-2)$ is not multiple of $9$, therefore we have the determinant of $M_K$ is not zero. Thus we have $M_k$ is non-singular when $\mathbf{K}=G_2.$

From the above discussion, we have $(m_1,m_2)=(0,0)$. Therefore, Theorem 1.6 is proved completely. \hfill $\square$
\medskip

At the end, we give the proof of Theorem \ref{one-concen} and Theorem 1.7.
\medskip

\noindent {\em Proof of Theorem \ref{one-concen} and Theorem 1.7.}
Suppose there exists a sequence of blow up solutions $(u_1^k,u_2^k)$ of (\ref{e-gen}) with $(\rho_1,\rho_2)=(\rho_1^k,\rho_2^k)$. At first, we prove Theorem \ref{one-concen}. From the previous discussion of this section, we get that at least one component (say $u_1^k$) of $u^k$ has fast decay on a small ball $B$ near each blow up point $q$, which means $u_1^k(x)\rightarrow-\infty$ if $x\not\in S$ and $x$ is not a blow up point. Hence Theorem \ref{one-concen} holds.

Because the mass distribution of $u_1^k$ is concentrate as $k\to+\infty,$ we get that $\lim_{k\to+\infty}\rho_1^k$ is equal to the sum of the local mass $\sigma_1$ at a blow up point $q$, which implies $\rho_1\in\Gamma_1,$ a contradiction to the assumption. Thus, we finish the proof of the Theorem 1.7.
\hfill $\square$
\vspace{1cm}

\end{document}